\newtheorem*{thm*}{Theorem}
\newtheorem{thm}{Theorem}[section]
\newtheorem{lemma}[thm]{Lemma}
\newtheorem*{prop*}{Proposition}
\newtheorem{clm}[thm]{Claim}
\newcommand\ex{\ensuremath{\mathrm{ex}}}
\newcommand\biex{\ensuremath{\mathrm{biex}}}
\newcommand\og{\ensuremath{\mathrm{og}}}
\newcommand\cD{{\mathcal D}}
\newcommand\cG{{\mathcal G}}
\newcommand\cN{{\mathcal N}}
\newcommand\cT{{\mathcal T}}
\newcommand{\ignore}[1]{}
\title{Some exact results for non-degenerate generalized Turán problems}
\author{Dániel Gerbner\\\small Alfr\'ed R\'enyi Institute of Mathematics\\
\small \texttt{gerbner.daniel@renyi.hu}}
\date{}
\begin{document}

\maketitle

\begin{abstract}
The generalized Tur\'an number $\mathrm{ex}(n,H,F)$ is the maximum number of copies of $H$ in $n$-vertex $F$-free graphs. We consider the case where $\chi(H)<\chi(F)$. There are several exact results on $\ex(n,H,F)$ when the extremal graph is a complete $(\chi(F)-1)$-partite graph. We obtain multiple exact results with other kinds of extremal graphs.
\end{abstract}

\textbf{Keywords:} generalized Tur\'an, stability, Tur\'an-good



\section{Introduction}

One of the most studied questions in extremal Combinatorics is the following: what is the largest number $\ex(n,F)$ of edges that an $n$-vertex graph can have, if it does not contain $F$ as a subgraph? Tur\'an \cite{T} showed that in the case $F=K_{r+1}$, the largest number of edges are in the Tur\'an graph $T(n,r)$, which is the complete $r$-partite graph with each part of order $\lfloor n/r\rfloor$ or $\lceil n/r\rceil$. Erd\H os, Stone and Simonovits \cite{ES1966,ES1946} showed that if $\chi(F)=r+1>2$, then $\ex(n,F)=(1+o(1))|E(T(n,r))|$.

A straightforward generalization of the above question is when instead of the number of edges, we consider the number of subgraphs isomorphic to a given graph $H$. Let $\cN(H,G)$ denote the number of copies of $H$ in $G$. Given graphs $H$ and $F$ and a positive integer $n$, their generalized Tur\'an number is $\ex(n,H,F)=\max\{\cN(H,G): \text{ $G$ is an $n$-vertex $F$-free graph}\}$. The systematic study of these numbers was initiated by Alon and Shikhelman \cite{ALS2016}.

One particular topic that has attracted lot of attention is the study of when the Tur\'an graph contains the most copies of $H$ among $F$-free graphs. More generally, when does a complete $(\chi(F)-1)$-partite graph contains the most copies of $H$? We say that $H$ is \textit{$F$-Tur\'an-good} if $\chi(H)<\chi(F)$ and $\ex(n,H,F)=\cN(H,T(n,r))$ for sufficiently large $n$. We say that $H$ is \textit{weakly $F$-Tur\'an-good} if $\chi(H)<\chi(F)$ and for sufficiently large $n$, $\ex(n,H,F)=\cN(H,T)$ for some complete $(\chi(F)-1)$-partite graph $T$. Note that for a given graph $H$, a straightforward but complicated computation determines which $n$-vertex complete $r$-partite graph contains the most copies of $H$. The very first result in the area is due to Zykov \cite{zykov} and states that if $k<r$, then $K_k$ is $K_r$-Tur\'an-good. The systematic study of $F$-Tur\'an-good graphs was initiated by Gy\H ori, Pach and Simonovits \cite{gypl} in the case $F=K_r$ and by Gerbner and Palmer \cite{gp3} for general $F$. 

A phenomenon often seen in this area is the so-called \textit{stability}. It refers to the property that if some $F$-free graph contains almost $\ex(n,F)$ edges (almost $\ex(n,H,F)$ copies of $H$), then it is in some sense close to the extremal graph. There are different versions of stability based on what ''almost'' and ''close'' means. The most studied version is based on the following notion. Given two graphs $H$ and $G$ on the same vertex set, the \textit{edit distance} of $H$ and $G$ is the least number of edges that need to be added to $H$ and removed from $H$ to obtain $G$. Slightly imprecisely, when we say that $H$ has edit distance at most $k$ from $T(n,r)$, we mean that we can obtain a graph isomorphic to $T(n,r)$ on the vertex set $V(G)$ by adding and deleting at most $k$ edges.

The classical Erd\H os-Simonovits stability theorem \cite{erd1,erd2,sim} states that if an $n$-vertex $F$-free graph $G$ has $\ex(n,F)-o(n^2)$ edges, then the edit distance of $G$ from $T(n,\chi(F)-1)$ is $o(n^2)$. We say that $H$ is \textit{$F$-Tur\'an-stable} if for every $n$-vertex $F$-free graph $G$ that contains $\ex(n,H,F)-o(n^{|V(H)|})$ copies of $H$, the edit distance of $G$ from $T(n,\chi(F)-1)$ is $o(n^2)$. We say that $H$ is \textit{weakly $F$-Tur\'an-stable} if for every $n$-vertex $F$-free graph $G$ that contains $\ex(n,H,F)-o(n^{|V(H)|})$ copies of $H$, the edit distance of $G$ is $o(n^2)$ from some complete $(\chi(F)-1)$-partite graph $T$. Using this language, the Erd\H os-Simonovits stability theorem states that $K_2$ is $F$-Tur\'an-good for every non-bipartite graph $F$.


The first stability result in generalized Tur\'an problems is due to Ma and Qiu \cite{mq}. They showed that 
if $\chi(F)=r+1$ and $k\le r$, then $K_k$ is $F$-Tur\'an-stable. They used this result to show that if $F$ has a color-critical edge, then $K_k$ is $F$-Tur\'an-good.
More precisely, they proved a bound for every $F$ that happens to give an exact result if $F$ has a color-critical edge. The \textit{decomposition family} $\cD(F)$ of $F$ consists of every bipartite graph that can be obtained by deleting $r-1$ classes from an $(r+1)$-coloring of $F$. Let $\biex(n,F)$ denote the maximum number of edges in an $n$-vertex graph that does not contain any member of the decomposition family of $F$. In particular, if $F$ has a color-critical edge, then $K_2$ is in the decomposition family of $F$, thus $\biex(n,F)=0$.

\begin{thm}[Ma, Qiu \cite{mq}]\label{maqi}
For any $k\le r$, $\ex(n,K_k,F)=\cN(K_k,T(n,r))+\biex(n,F)\Theta(n^{k-2})$.
\end{thm}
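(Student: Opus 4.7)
I would construct an $F$-free graph $G$ as follows: partition $[n]$ into parts $V_1,\dots,V_r$ of sizes $\lfloor n/r\rfloor$ or $\lceil n/r\rceil$, let $G_0:=T(n,r)$ be the complete $r$-partite graph on them, let $H$ be a $\cD(F)$-free graph on $V_1$ with $(1-o(1))\biex(n,F)$ edges, and set $G:=G_0\cup H$. For $F$-freeness: if $F\hookrightarrow G$ with vertex classes $F_i:=V(F)\cap V_i$, then $F_2,\dots,F_r$ are independent in $F$, so in any $(r+1)$-coloring of $F$ using color $i$ on $F_i$ for $i\ge 2$ only two colors remain for $F_1$, forcing $F[F_1]$ to be bipartite; since $F[F_1]$ is precisely what is left after deleting $r-1$ color classes of that coloring, $F[F_1]\in\cD(F)$, contradicting $\cD(F)$-freeness of $H$ (as $F[F_1]\subseteq H$). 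To count copies, each edge of $H$ extends to $\binom{r-1}{k-2}\lfloor n/r\rfloor^{k-2}$ new $K_k$'s via one vertex from each of $k-2$ of the remaining parts; distinct edges of $H$ yield distinct such $K_k$'s (each contains exactly one $V_1$-internal edge), so $\cN(K_k,G)\ge \cN(K_k,T(n,r))+\Omega(\biex(n,F)\,n^{k-2})$.

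\textbf{Upper bound via stability and cleaning.} Let $G$ be $F$-free with $\cN(K_k,G)=\ex(n,K_k,F)\ge \cN(K_k,T(n,r))$. By the $K_k$-$F$-stability result of Ma--Qiu announced above, there is a partition $V(G)=U_1\cup\dots\cup U_r$ with $\bigl||U_i|-n/r\bigr|=o(n)$ and at most $o(n^2)$ ``wrong'' edges (internal edges within some $U_i$ or missing crossing edges between distinct $U_i,U_j$). A standard cleaning step -- iteratively deleting any vertex incident to more than $\varepsilon n$ wrong edges -- produces an induced subgraph $G^\ast$ on $(1-o(1))n$ vertices in which every vertex has $o(n)$ wrong incidences. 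I would then argue that $G^\ast[U_i\cap V(G^\ast)]$ is $\cD(F)$-free for each $i$: otherwise take $D\in\cD(F)$ inside it, realized as two color classes of an $(r+1)$-coloring of $F$, and greedily embed the remaining $r-1$ classes of this coloring into the $U_j\cap V(G^\ast)$, $j\neq i$, via crossing edges; each already-placed vertex rules out only $o(n)$ candidates per step, so the extension succeeds and produces $F\subseteq G$, a contradiction. Hence $\sum_i e(G^\ast[U_i])\le r\cdot\biex(n,F)$.

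\textbf{Final count and main obstacle.} Copies of $K_k$ in $G$ split as crossing (one vertex per part) plus those using at least one internal edge. Crossing copies number at most $\cN(K_k,K_{|U_1|,\dots,|U_r|})\le\cN(K_k,T(n,r))$ by the balancedness optimum of the Tur\'an partition; each internal edge extends to $O(n^{k-2})$ copies, so the internal-edge copies contribute $O(\biex(n,F)\,n^{k-2})$, matching the lower bound up to constants. The principal obstacle is calibrating the cleaning so that the $o(n)$ removed vertices do not themselves contribute more $K_k$'s than this budget -- a naive cleaning can lose $o(n^k)$ copies, which exceeds $O(\biex(n,F)\,n^{k-2})$ when $\biex(n,F)$ is near linear. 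A safer route is a Zykov-style symmetrization: successively rewire low-$K_k$-contribution vertices' neighborhoods to match high-contribution ones, preserving $F$-freeness and weakly increasing $\cN(K_k,\cdot)$, until $G$ reduces to $T(n,r)\cup H$ with $H$ supported on a single part, at which point $H$ must be $\cD(F)$-free by the verification in the lower bound.
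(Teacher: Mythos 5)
Your lower bound is the right construction, but the $F$-freeness verification is circular: you assume there is an $(r+1)$-coloring of $F$ assigning color $i$ to $F_i$ for $i\ge 2$ and conclude that $F[F_1]$ is bipartite, whereas such a coloring exists only if $F[F_1]$ is already bipartite. If $\chi(F[F_1])\ge 3$, your argument says nothing, and you would need the (true but nontrivial) fact that every $F-(I_2\cup\dots\cup I_r)$ with the $I_j$ independent contains a member of $\cD(F)$; this requires choosing the embedding carefully rather than quoting the definition.

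The serious problem is the upper bound, and you have diagnosed it yourself: after stability, deleting the $o(n)$ ``dirty'' vertices throws away up to $o(n^{k})$ copies of $K_k$, which swamps the target error $\biex(n,F)\Theta(n^{k-2})$ (and is fatal already when $\biex(n,F)=0$, where the theorem is exact). Your proposed repair, Zykov-style symmetrization, does not work here: rewiring a vertex's neighborhood to duplicate another vertex preserves $K_{r+1}$-freeness but not $F$-freeness for general $F$ (creating twins can create new copies of $F$), so the process need not terminate in an $F$-free graph of the claimed form. The paper (following Ma--Qiu, and in the generalized form of its Theorem \ref{thm1}) closes exactly this gap with two ideas absent from your proposal. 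First, the exceptional vertices are split into two types and neither is deleted: the set $B$ of vertices with $\Omega(n)$ neighbors inside their own part is shown to have \emph{bounded} size ($|B|\le rK(\sigma(F)-1)=O(1)$) via the Erd\H{o}s--Simonovits supersaturation theorem (each such vertex's neighborhood spans $\Omega(n^{br})$ copies of $K_{b,\dots,b}$, and each such copy tolerates at most $\sigma(F)-1$ vertices of $B$); the copies of $K_k$ through $B$ are then $O(n^{k-1})$, which is absorbed because $\sigma(F)>1$ forces $\biex(n,F)\ge n-1$. Second, vertices with $\Omega(n)$ missing cross-edges are eliminated by an explicit rewiring (delete their internal edges, add all cross-edges to the clean part of the other classes) together with a minimal-counterexample replacement argument showing the rewired graph is still $F$-free and has strictly more copies, contradicting extremality. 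Without these two steps your counting cannot be made to match the stated error term.
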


This approach where we prove that $H$ is $F$-Tur\'an-stable and use it to prove that $H$ is $F$-Tur\'an-good can also be found in \cite{lm,murnir,hhl,ger2}. Finally, Gerbner \cite{gerb2} provided the following general formulation.

\begin{thm}\label{haszn}
Let $\chi(F)>\chi(H)$ and assume that $F$ has a color-critical edge. If $H$ is weakly $F$-Tur\'an-stable, then $H$ is weakly $F$-Tur\'an-good. 
\end{thm}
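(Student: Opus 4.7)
Let $r = \chi(F) - 1$ and let $G$ be an $n$-vertex $F$-free graph with $\cN(H, G) = \ex(n, H, F)$, $n$ sufficiently large. The goal is to construct a complete $r$-partite graph $T'$ on $V(G)$ with $\cN(H, T') \geq \cN(H, G)$; since such $T'$ is $F$-free we automatically have $\cN(H, T') \leq \ex(n, H, F) = \cN(H, G)$, so equality follows and $H$ is weakly $F$-Tur\'an-good.

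I would begin by applying weak $F$-Tur\'an-stability of $H$ to $G$, which trivially satisfies $\cN(H, G) \geq \ex(n, H, F) - o(n^{|V(H)|})$: this yields a complete $r$-partite graph on $V(G)$, with partition $V_1, \ldots, V_r$, at edit distance $o(n^2)$ from $G$. For a small constant $\epsilon > 0$, let $L$ be the set of vertices $v$ having fewer than $(1/r - \epsilon)n$ neighbors in some class $V_j$ with $j \neq i(v)$; a standard averaging argument gives $|L| = o(n)$. Next, invoking the color-critical edge $e$ of $F$ (so that $F - e$ is $r$-chromatic), I would run the Simonovits embedding argument to show that $G$ has no edge inside any $V_i$ with both endpoints in $V \setminus L$: were $uv$ such an edge, one could greedily embed $F - e$ into $G$ with $e$'s endpoints sent to $u, v$ and the remaining color classes mapped into the appropriate $V_j$'s — possible since every vertex of $V \setminus L$ has at least $(1/r - \epsilon)n$ neighbors in each other class — producing a copy of $F$ in $G$, a contradiction. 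Consequently $G - L$ is a subgraph of the complete $r$-partite graph $T_0$ on the parts $V_i \setminus L$.

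I then construct $T'$ by extending $T_0$ back to $V(G)$: add the vertices of $L$ one at a time, placing each $v \in L$ in whichever of the $r$ classes maximizes the number of $H$-copies through $v$ in the current partial complete $r$-partite graph. The resulting $T'$ is complete $r$-partite, hence $F$-free. To close the argument, I partition copies of $H$ in $G$ according to whether they meet $L$: copies disjoint from $L$ lie in $G - L \subseteq T_0 = T'[V \setminus L]$, and so are bounded by $\cN(H, T_0)$; copies using one or more vertices of $L$ must be bounded by the corresponding counts in $T'$ via the optimal class choices made during the construction.

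The principal obstacle is this final comparison for copies through $L$: a vertex $v \in L$ may support $H$-copies in $G$ via an atypical neighborhood spanning several classes, and one must show these are dominated by the $H$-copies through $v$ in $T'$, where $v$ enjoys a full cross-class neighborhood. The natural approach is an induction on $|L|$, removing $L$-vertices one at a time from both $G$ and $T'$ and arguing that the difference $\cN(H, T') - \cN(H, G)$ never decreases, exploiting the extremality of $G$ to rule out any excess and using the bound $|L| = o(n)$ to absorb lower-order slack into the error produced by the greedy embedding step. Making this exact (rather than merely asymptotic) comparison — so that the final equality $\cN(H, G) = \cN(H, T')$ holds on the nose — is the technical heart of the argument.
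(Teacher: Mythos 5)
Your overall plan is the right one, and the first two steps (extracting an $o(n)$ set $L$ of atypical vertices and using the color-critical edge to greedily embed $F$ whenever an edge survives inside a class of $V\setminus L$) match what the paper does in the proof of Theorem \ref{thm1}, which is the generalization it actually proves (Theorem \ref{haszn} itself is only quoted from \cite{gerb2}). But there is a genuine gap exactly where you flag it, and the strategy you sketch for closing it does not work. Your final comparison rests on the per-vertex claim that each $v\in L$, placed in its best class of $T'$, supports at least as many $H$-copies as it does in $G$. This can fail: weak $F$-Tur\'an-stability only gives a complete $r$-partite graph with parts of order $\Omega(n)$, not balanced parts, so a vertex $v\in L$ whose $G$-neighborhood is essentially the largest class $V_1$ has (already for $H=K_2$, $r=2$) up to $|V_1|$ incident copies in $G$ but only $n-|V_1|<|V_1|$ in any complete bipartite completion. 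The excess is ruled out only by the global extremality of $G$, and your proposed ``induction on $|L|$, removing vertices one at a time so that $\cN(H,T')-\cN(H,G)$ never decreases'' is an assertion, not an argument: there is no monotonicity to exploit, and extremality of $G$ does not control the count of copies through a single vertex. (A smaller issue: defining $L$ by ``fewer than $(1/r-\epsilon)n$ neighbors in some other class'' is calibrated to balanced parts; for unbalanced partitions you need ``more than $\epsilon n$ non-neighbors outside its own class'', as otherwise two non-$L$ vertices need not have many common neighbors in a large class and the greedy embedding of $F-e$ can fail.)

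The way the paper closes this is not a comparison with a completed graph but a local improvement of $G$ itself: for each atypical vertex $v$, delete its few ($\le\gamma n$) edges inside its class and join it to all typical vertices of the other classes. This destroys at most $\gamma n^{|V(H)|-1}$ copies of $H$ and, by a greedy count, creates $\Omega(\beta n^{|V(H)|-1})$ new ones with $\gamma\ll\beta$; the key lemma, which your proposal has no analogue of, is that the rewired graph is still $F$-free (proved by taking a copy of $F$ with the fewest rewired vertices and replacing one by a common neighbor of its partners). This contradicts the extremality of $G$ unless there are no atypical vertices at all, whence $G$ is itself a subgraph of a complete $r$-partite graph $T$ (the color-critical edge puts $K_2$ in the decomposition family, so no edge survives inside a class), and $\cN(H,G)\le\cN(H,T)\le\ex(n,H,F)$ gives the exact result with no residual comparison to make. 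To repair your write-up you would need to replace the final ``optimal placement plus induction'' step by such a rewiring argument, including the proof that rewiring preserves $F$-freeness.
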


Let us remark that the same property is also implied by the weaker assumption that there is an $n$-vertex $F$-free graph $G$ with $\cN(H,G)= \ex(n,H,F)$ that has edit distance $o(n^2)$ from a complete $(\chi(F)-1)$-partite graph. We emphasize that what makes the above theorem especially useful is the simple observation from \cite{gerb2} that if $H$ is weakly $K_{\chi(F)}$-Tur\'an-stable, then $H$ is weakly $F$-Tur\'an-stable, thus one stability result gives infinitely many exact results.

In this paper we extend this result by going beyond $(\chi(F)-1)$-partite graphs: we obtain exact results when the extremal construction is obtained by adding further edges to a complete $(\chi(F)-1)$-partite graph. One such result has been obtained by Gerbner and Patk\'os \cite{gerpat}. Let $B_{r,1}$ denote the graph consisting of two copies of $K_r$ sharing a vertex. It was shown in \cite{cgpw} that among $B_{r,1}$-free graphs, the most edges are contained in the Tur\'an graph plus an additional edge. It was extended in \cite{gerpat} to any $K_r$ with $r<k$ in place of $K_2$. There are further results when $B_{3,1}$ is forbidden in \cite{ger2}.



Our first result is a common generalization of Theorems \ref{maqi} and \ref{haszn}.

\begin{thm}\label{thm1}
\textbf{(i)} Let $r+1=\chi(F)>\chi(H)$ and assume that $H$ is weakly $F$-Tur\'an-stable. Then $\ex(n,H,F)\le\cN(H,T)+\biex(n,F)\Theta(n^{|V(H)|-2})$ for some $n$-vertex complete $r$-partite graph $T$. 
Moreover, for every $n$-vertex $F$-free graph $G$ with $\cN(H,G)=\ex(n,H,F)$ there is an $r$-partition of $V(G)$ to $A_1,\dots,A_r$, a constant $K=K(F)$ and a set $B$ of at most $rK(\sigma(F)-1)$ vertices such that each member of $\cD(F)$ inside a part shares at least two vertices with $B$, every vertex of $B$ is adjacent to $\Omega(n)$ vertices in each part, and every vertex of $A_i\setminus B$ is adjacent to $o(n)$ vertices in $A_i$ and all but $o(n)$ vertices in $A_j$ with $j\neq i$.

\smallskip

\noindent\textbf{(ii)} $\ex(n,H,F)=\cN(H,T)+\biex(n,F)\Theta(n^{|V(H)|-2})$ if there is a connected component $H'$ of $H$ such that one of the followings hold.

\textbf{(a)} $H'$ has a coloring with at most $r$ colors that is almost proper: there is exactly one edge whose endpoints have the same color.

\textbf{(b)} $\biex(n,H')=o(\biex(n,F))$.
\end{thm}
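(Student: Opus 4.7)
My approach follows the stability-then-structure paradigm underlying Theorems~\ref{maqi} and~\ref{haszn}. The plan is to upgrade weak $F$-Tur\'an-stability to a fine structural description of an extremal graph in part~(i), and then read off a matching lower bound from the natural construction in part~(ii).

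For~(i), let $G$ be an $F$-free graph with $\cN(H,G)=\ex(n,H,F)$. The easy construction in~(ii) already gives $\ex(n,H,F)\ge(1-o(1))\cN(H,T(n,r))$, so weak $F$-Tur\'an-stability supplies a complete $r$-partite graph on $V(G)$ at edit distance $o(n^2)$ from $G$, inducing a partition $A_1,\dots,A_r$. I would call $v\in A_i$ \emph{typical} if it has $o(n)$ neighbours in $A_i$ and all but $o(n)$ neighbours in every other $A_j$, and collect the remaining vertices into $B$. A Zykov-style symmetrisation --- replacing an atypical vertex by a clone of a typical vertex in some other part cannot decrease $\cN(H,G)$ unless the atypical vertex already has $\Omega(n)$ neighbours in every part --- shows every $v\in B$ is adjacent to $\Omega(n)$ vertices of each $A_j$. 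The bound $|B|\le rK(\sigma(F)-1)$ then follows because any $\sigma(F)$ vertices of $B$ inside a single part, together with typical vertices of the remaining parts, can be greedily completed to a copy of $F$ in $G$, forcing $|B\cap A_i|\le K(\sigma(F)-1)$. An analogous extension argument shows that every $D\in\cD(F)$ lying inside an $A_i$ must meet $B$ in at least two vertices, since otherwise $D$ could be extended greedily across the other $r-1$ parts to build a copy of $F$. Consequently $G[A_i\setminus B]$ is $\cD(F)$-free and so contributes at most $\biex(n,F)$ intra-part edges outside $B$, while the $O(|B|n)=O(n)$ intra-part edges incident to $B$ contribute only a lower-order term. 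Counting $\cN(H,G)$ now splits cleanly: copies using only inter-part edges are at most $\cN(H,T)$ for the complete $r$-partite completion $T$ of $G$, while each intra-part edge contributes $O(n^{|V(H)|-2})$ further copies of $H$. The main obstacle is making the symmetrisation step work for $\cN(H,\cdot)$ rather than for edge count, which I would handle by a local contribution-comparison argument, exploiting that typical vertices of a given part have essentially identical link structures in $G$.

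For~(ii), I would take $G^*=T\cup G_0$, where $T$ is the complete $r$-partite graph on $n$ vertices maximising $\cN(H,\cdot)$ and $G_0$ is a $\cD(F)$-free graph on one fixed part $A_1$ with $(1-o(1))\biex(n,F)$ edges; by the definition of $\cD(F)$, the graph $G^*$ is $F$-free. In case~(a), fix a colouring of $H'$ with at most $r$ colours having exactly one monochromatic edge $xy$, placed in colour~$1$. For each $uv\in E(G_0)$, setting $x\mapsto u$, $y\mapsto v$ and mapping the remaining colour classes injectively into the other parts of $T$ yields $\Theta(n^{|V(H')|-2})$ new copies of $H'$, giving $\Theta(\biex(n,F)\cdot n^{|V(H')|-2})$ copies in total; copies using $\ge 2$ edges of $G_0$ contribute only $O(\biex(n,F)^2 n^{|V(H')|-4})=o(\biex(n,F)n^{|V(H')|-2})$ since $\biex(n,F)=o(n^2)$. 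In case~(b), the hypothesis $\biex(n,H')=o(\biex(n,F))$ forces $G_0$ to exceed the $\cD(H')$-Tur\'an number by an arbitrarily large factor, so an Erd\H os-Simonovits supersaturation argument applied to the bipartite family $\cD(H')$ produces sufficiently many copies of some fixed $D\in\cD(H')$ in $G_0$, each of which extends via one vertex per remaining colour class of a proper $\chi(H')$-colouring of $H'$ into a new copy of $H'$ in $G^*$. In either case, every new copy of $H'$ is completed to a copy of $H$ by placing the other components of $H$ into $T$ in any of $\Theta(n^{|V(H)|-|V(H')|})$ ways, producing the required $\Theta(\biex(n,F)\cdot n^{|V(H)|-2})$ lower bound that matches the upper bound from~(i).
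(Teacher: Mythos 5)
Your overall architecture matches the paper's: stability gives the $r$-partition, one isolates a small exceptional set $B$, symmetrizes the remaining atypical vertices, shows no member of $\cD(F)$ sits inside a part with at most one vertex in $B$, and counts; the lower bound in (ii) comes from planting an extremal $\cD(F)$-free graph in one part. However, there are genuine gaps at the two technically hardest points. First, your bound $|B\cap A_i|\le K(\sigma(F)-1)$ via ``any $\sigma(F)$ vertices of $B$ in one part can be greedily completed to a copy of $F$'' does not work: a vertex of $B$ is only guaranteed $\Omega(n)$ (say $\gamma n$ for a small $\gamma$) neighbours in each part, not all but $o(n)$ of them, so $\sigma(F)$ such vertices may have empty common neighbourhood and no greedy completion is possible. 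The paper (following Ma--Qiu's Claim 4.2) instead applies Erd\H os--Simonovits supersaturation inside the neighbourhood of each $v\in B$ to produce $cn^{br}$ copies of $K_{b,\dots,b}$ adjacent to $v$, and then double-counts an auxiliary bipartite graph between $B$ and all such copies, using that each copy of $K_{b,\dots,b}$ can be adjacent to at most $\sigma(F)-1$ vertices of $B$; this is where the constant $K=1/c$ comes from, and it cannot be replaced by a direct greedy argument. Second, you explicitly defer the symmetrization step (``the main obstacle\dots which I would handle by a local contribution-comparison argument''), but this is the crux of (i). Moreover, the clone-based symmetrization you sketch is dangerous for general $F$: duplicating a vertex can create a copy of $F$ (unlike for $F=K_{r+1}$). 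The paper instead rewires each bad vertex $v$ to all of $V_j$ for $j\neq i$, deletes its intra-part edges, compares the $\Omega(\beta n^{|V(H)|-1})$ gained copies of $H$ against the $O(\gamma n^{|V(H)|-1})$ lost ones with $\gamma\ll\beta$, and proves $F$-freeness of the modified graph by a minimal-counterexample/replacement argument; none of this is present in your sketch.

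A third, smaller issue is the final count: copies of $H$ containing a vertex of $B$ need not use an intra-part edge of $G$ at all, and they number up to $\Theta(|B|\,n^{|V(H)|-1})$, which is \emph{not} $O(\biex(n,F)n^{|V(H)|-2})$ when $\biex(n,F)=O(1)$. Your claim that these contribute ``only a lower-order term'' needs the dichotomy the paper uses: if $\sigma(F)=1$ then $B=\emptyset$, while if $\sigma(F)\ge 2$ then a star lies in $\cD(F)$, so $\biex(n,F)\ge n-1$ and the $n^{|V(H)|-1}$ term is absorbed. Your part (ii) is essentially the paper's argument (case (a) verbatim; for case (b) the paper deletes copies of members of $\cD(H')$ greedily rather than invoking supersaturation, but your route is also viable), modulo the minor point that a $\cD(F)$-free graph on a part of size $|A_i|<n$ attains $\Theta(\biex(n,F))$ rather than $(1-o(1))\biex(n,F)$ edges.
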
 

In particular, if $F$ has a color-critical vertex, there are no members of $\cD(F)$ inside the parts $A_i$. 
An example where \textbf{(ii)} does not hold is $\ex(n,C_4,F_2)$. It was shown in \cite{gp3} that $C_4$ is $F_2$-Tur\'an-good, i.e., $\ex(n,C_4,F_2)=\max\{\cN(H,T)\}$, while $\biex(n,F)=1$. A more general result is in \cite{gerbner2}, showing that for any $F$ with a color-critical vertex there is a graph $H$ that is $F$-Tur\'an-good. Again, the error term is 0 instead of $\biex(n,F)\Theta(n^{|V(H)|-2})$.


We can obtain several exact results. First we generalize a theorem of Moon \cite{moon} that determines $\ex(n,F)$ where $F$ consists of $s$ vertex-disjoint copies of $K_{r+1}$.

\begin{thm}\label{disconn}
Let $F$ consist of $s>1$ components with chromatic number $r+1$, each with a color-critical edge, and any number of components with chromatic number at most $r$. Let $H$ be a weakly $F$-Tur\'an-stable graph and $n$ sufficiently large. Then $\ex(n,H,F)=\cN(H,T)$ for a complete $(s+r-1)$-partite graph $T$ with $s-1$ parts of order 1.
\end{thm}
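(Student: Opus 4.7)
First, for the lower bound, in $T$, any subgraph of chromatic number $r+1$ must use at least one of the $s-1$ singleton parts, since removing them leaves a complete $r$-partite (hence $r$-colorable) graph. A copy of $F$ in $T$ would therefore require $s$ pairwise distinct singletons, which is impossible; so $T$ is $F$-free and $\ex(n,H,F)\ge\cN(H,T)$.

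For the upper bound, I would let $G$ be an extremal $F$-free $n$-vertex graph and apply Theorem \ref{thm1}(i) with our weakly $F$-Tur\'an-stable $H$ to obtain the $r$-partition $A_1,\dots,A_r$ and the constant-size set $B$ of apex-like vertices. The central embedding tool is this: for each component $F_i$ of $F$ with color-critical edge $e_i$, $F_i-e_i$ is $r$-colorable with the endpoints of $e_i$ sharing a color class, so $F_i$ embeds into any ``anchor plus large almost-complete $r$-partite graph'' configuration -- where the anchor is either an apex vertex $b\in B$ or an edge $uv$ inside some $A_i\setminus B$ -- by sending the two endpoints of $e_i$ to the anchor and distributing the remaining color classes across the $r$ parts. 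If the number of vertex-disjoint anchors (vertices of $B$ together with a within-part matching of $V(G)\setminus B$) were at least $s$, I would embed the $s$ big components of $F$ on vertex-disjoint targets in the $A_j$'s (feasible since $|V(F)|$ is constant, $|A_j|=\Omega(n)$, and $|N(b)\cap A_j|=\Omega(n)$) and fit the $\chi\le r$ components in the leftover Tur\'an-type structure, contradicting $F$-freeness. Writing $\nu$ for the matching number of $\bigcup_i G[A_i\setminus B]$, I obtain the constraint $|B|+\nu\le s-1$.

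For the identification $G\cong T$, I would use that the extremal $G$ is edge-maximal $F$-free, which forces all between-parts edges in $V(G)\setminus B$ and all edges from $B$ to the rest to be present (none of these creates a new anchor, so $F$-freeness is preserved). A Zykov-type vertex-swap argument then shows that each within-part edge should be traded for an additional apex vertex: an apex contributes $\Theta(n^{|V(H)|-1})$ copies of $H$, while a within-part edge contributes only $O(n^{|V(H)|-2})$, so upgrading a non-apex vertex to an apex (and removing any spurious within-part edge it absorbs) strictly increases $\cN(H,G)$. Combined with the constraint $|B|+\nu\le s-1$, this forces $\nu=0$ and $|B|=s-1$, so $G$ is a complete $(r+s-1)$-partite graph with $s-1$ singleton parts. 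A balancing argument on the residual $r$ big parts then yields $G\cong T$.

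The hard part will be justifying the ``trade'' step rigorously: the swap must strictly improve $\cN(H,G)$ after accounting for removed edges and any incidental rearrangement, while respecting the constraint $|B|+\nu\le s-1$. This should work because Theorem \ref{thm1}(i) guarantees that the non-apex portion of $G$ is an almost-complete $r$-partite graph on $\Omega(n)$ per part, ensuring the asymptotic dominance $\Theta(n^{|V(H)|-1})\gg O(n^{|V(H)|-2})$ of apex over within-part-edge contributions.
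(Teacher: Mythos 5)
Your proposal is correct and follows essentially the same route as the paper: apply Theorem \ref{thm1}, use the color-critical edges to greedily embed $F$ whenever there are $s$ vertex-disjoint within-part anchors (each with an endpoint outside $B$, so that the common neighborhoods are $\Omega(n)$), deduce $|B|+\nu\le s-1$, and then trade the within-part edges outside $B$ for an extra apex vertex to force $|B|=s-1$ and $\nu=0$, so that $G$ lies inside a complete $(s+r-1)$-partite graph with $s-1$ singleton parts. Two small cautions: carry out the trade per star-center rather than per edge (a single non-$B$ vertex may be incident to $o(n)$ within-part edges yet consume only one unit of the budget $s-1$, so the removed edges cost $o(n^{|V(H)|-1})$ copies in total, still beaten by the apex's $\Theta(n^{|V(H)|-1})$), and you neither need nor can conclude $G\cong T$ with balanced parts from weak stability --- containment in the ($F$-free, since any copy of $F$ would need $s$ distinct singletons) complete multipartite graph of the stated shape already yields the upper bound by monotonicity of $\cN(H,\cdot)$.
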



From now on we will focus on the case where $F$ has a color-critical vertex. An example where we can obtain an exact result is when all the parts can contain any $\cD(F)$-free graphs at the same time. Note that this does not necessarily mean that we embed a $\cD(F)$-free graph with the maximum number of edges into the parts.

Let us consider the complete $(r+1)$-partite graph $K_{1,a,\dots,a}$. Clearly its decomposition family contains the star $K_{1,a}$, thus graphs with maximum degree at most $a-1$ can be embedded into each part. Let us call a graph \textit{almost $a$-regular} if it is either $a$-regular or has one vertex of degree $a-1$ and each other vertex has degree $a$.

Let $\cT_0^{(s)}(n,r)$ denote the following family of graphs. We take a complete $r$-partite graph $T$, and for each part $A_i$, we embed an almost $s$-regular graph. It is easy to see that these graphs are $K_{1,a,\dots,a}$-free. Let $\cT^{(s)}(n,r)$ denote the subfamily of $\cT_0^{(s)}(n,r)$ where $T$ is the Tur\'an graph $T(n,r)$. Simonovits \cite{sim} showed that the graphs in $\cT^{(a-1)}(n,r)$ have the most edges among $K_{1,a,\dots,a}$-free graphs. 

\begin{thm}\label{treecompl}
Let $F$ be the complete $(r+1)$-partite graph $K_{1,a,\dots,a}$, $H$ be a weakly $F$-Tur\'an-stable graph and $n$ sufficiently large. 

\textbf{(i)} $\ex(n,H,F)=\cN(H,T)$ for some $T\in \cT_0^{(a-1)}(n,r)$.

\textbf{(ii)} If $H$ is a forest, then $\ex(n,H,F)=\cN(H,T)$ for every $T\in \cT_0^{(a-1)}(n,r)$ where the graph embedded into each part has girth at least $|V(H)|$.

\end{thm}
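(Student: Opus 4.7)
The plan is to combine the structural conclusions of Theorem~\ref{thm1}(i) with two specifics of $F=K_{1,a,\ldots,a}$: it has a color-critical vertex (the singleton vertex), and its decomposition family $\cD(F)$ contains $K_{1,a}$, so that $\cD(F)$-freeness is equivalent to having maximum degree at most $a-1$, with $\biex(n,F)=\lfloor(a-1)n/2\rfloor$. For (i), I would start with an extremal $F$-free $G$ and apply Theorem~\ref{thm1}(i) to obtain a partition $V(G)=A_1\cup\cdots\cup A_r$ and a bad set $B$. Invoking the color-critical-vertex remark following Theorem~\ref{thm1} rules out any $\cD(F)$-member inside a part; in particular no $K_{1,a}$ lies inside any $A_i$, so the graph induced on each $A_i$ has maximum degree at most $a-1$, and $B=\emptyset$ since a vertex of $B$ would have $\Omega(n)$ neighbors in its own part. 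I would then complete all missing between-part edges to form $G^+$ and verify that $G^+$ remains $F$-free via a short pigeonhole check: any embedding of $F$ distributes its $r+1$ color classes among the $r$ parts, so either two $a$-classes share a part (forcing $K_{a,a}$ inside, hence maximum degree $\ge a$) or the singleton vertex shares a part with an $a$-class (forcing $K_{1,a}$ inside), contradicting the degree bound. Hence $G^+$ is extremal and has the form ``complete $r$-partite between parts plus inside graphs of maximum degree at most $a-1$''.

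To finish (i), I would show each inside graph $G_i$ is almost $(a-1)$-regular by decomposing $\cN(H,G^+)$ as a sum over vertex-partitions $V(H)=V_1\cup\cdots\cup V_r$ of products $\prod_i\#\{\text{injective homs }H[V_i]\to G_i\}$ (times an automorphism factor). Partitions in which every $V_i$ is independent in $H$ contribute a quantity independent of the $G_i$, the leading inside-dependent term is $\sum_i B_i|E(G_i)|$ of order $n^{|V(H)|-1}$ with strictly positive coefficients $B_i$, and lower-order inside-dependent terms are $O(n^{|V(H)|-2})$; extremality therefore forces $|E(G_i)|=\lfloor(a-1)|A_i|/2\rfloor$ for every $i$, which is only achieved by almost $(a-1)$-regular graphs, so $G^+\in\cT_0^{(a-1)}(n,r)$. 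For (ii), I would take $T^*\in\cT_0^{(a-1)}(n,r)$ extremal from (i) and compare it with any $T\in\cT_0^{(a-1)}(n,r)$ with the same partition sizes whose inside graphs have girth at least $|V(H)|$. Decomposing $\cN(H,T)$ as above, each $H[V_i]$ is a forest on fewer than $|V(H)|$ vertices, and counting injective homomorphisms of such a forest into an almost $(a-1)$-regular graph of girth at least $|V(H)|$ yields a value determined entirely by $|A_i|$ and $a-1$, because the girth hypothesis precludes every collision during the tree-growing process. Hence $\cN(H,T)=\cN(H,T^*)=\ex(n,H,F)$ for every such $T$.

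The main obstacle is verifying that the coefficient $B_i$ in the 1-inside-edge term for (i) is strictly positive. This reduces to showing that $H$ admits an almost-proper $r$-coloring with exactly one monochromatic edge, which is precisely the hypothesis of Theorem~\ref{thm1}(ii)(a); a careful recoloring argument handles this whenever $\chi(H)\le r$ and $H$ has at least one edge, while degenerate cases can be addressed via Theorem~\ref{thm1}(ii)(b) using a component of small $\biex$.
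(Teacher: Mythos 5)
Your overall route is the same as the paper's: reduce via Theorem~\ref{thm1} to a complete $r$-partite graph with $K_{1,a}$-free graphs $G_i$ inside the parts, show each $G_i$ must have the maximum number of edges and hence be almost $(a-1)$-regular, and for \textbf{(ii)} use that forest counts in almost-regular graphs of large girth depend only on the number of vertices and the degree (the paper cites Cambie, de Verclos and Kang for this; your direct tree-growing count is equivalent). But three of your steps do not go through as written. First, the pigeonhole check for the $F$-freeness of $G^+$ is invalid: an embedding of $F$ into $G^+$ need not map color classes of $F$ into single parts, so no class need land inside one $A_i$. Worse, per-part $\cD(F)$-freeness genuinely does not suffice: for $F=K_{1,3,3}$, take a $C_4$ inside $A_1$ and a triangle inside $A_2$ (both of maximum degree $2=a-1$) joined completely; mapping the singleton of $F$ to a triangle vertex and splitting each $3$-class of $F$ as two opposite $C_4$-vertices plus one triangle vertex produces a copy of $K_{1,3,3}$. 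The paper asserts this completion step in one line as well, but any correct argument must use more than the per-part degree bound.

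Second, and most centrally, ``extremality therefore forces $|E(G_i)|=\lfloor(a-1)|A_i|/2\rfloor$'' does not follow from your order-of-magnitude comparison: decreasing $|E(G_i)|$ by a constant $\delta$ costs only $\delta\cdot\Theta(n^{|V(H)|-2})$ in your linear term, which is exactly the order of the remaining inside-dependent terms, so you only conclude that $|E(G_i)|$ is within $O(1)$ of the maximum. Closing that $O(1)$ gap is the real content of the paper's proof: it regularizes a near-extremal $G_i$ by an $O(1)$-vertex edit and shows that the count of every inside configuration other than a single edge changes by only $O(n^{|V(H)|-3})$ (each affected copy must meet the edited vertices, losing a degree of freedom), while the single-edge term gains $\Omega(n^{|V(H)|-2})$. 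Third, your resolution of the ``main obstacle'' rests on a false claim: not every $H$ with $\chi(H)\le r$ and an edge admits an $r$-coloring with exactly one monochromatic edge. For $H=K_{a,a}$ (e.g.\ $C_4$) and $r=2$, every $2$-coloring has an even number of monochromatic edges, so the per-edge coefficient can vanish; the paper itself flags $\ex(n,C_4,F_2)$ as such a case, and Theorem~\ref{thm1}\textbf{(ii)(b)} does not rescue it either. When the coefficients vanish, the dominant inside-dependent term is no longer linear in the $|E(G_i)|$ (for $C_4$ it is proportional to $\sum_{i\neq j}|E(G_i)||E(G_j)|$), and your argument for \textbf{(i)} stops there.
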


We remark that \textbf{(i)} is not immediate from the preceding argument, as it is possible that a graph embedded into a part with less edges contains more copies of some (subgraph of) $H$. Consider now cliques $K_k$. It is not hard to show that we need to embed into each part of a complete $r$-partite graph an almost $a$-regular graph with the maximum number of triangles, and among those we need a graph with the maximum number of $K_4$s, and so on. However, to determine these graphs does not seem to be a simple problem. A very special case is solved in \cite{zcggyh}.

We also remark that linear forests, (thus paths) are  $F$-Tur\'an-stable for every $r$, and forests $H$ containing a matching of size $\lfloor |V(H)|/2\rfloor$ are $F$-Tur\'an-stable for $r=3$. Thus, \textbf{(ii)} gives an exact result for those graphs.

 
Assume now that $\biex(n,F)=O(1)$. This will be useful since we add to a complete $r$-partite graph $T$ and delete from $T$ $O(1)$ edges, thus it is enough to obtain asymptotic results on the number of copies of $H$ containing those edges. 

 \begin{thm}\label{thm3}
 Let $r+1=\chi(F)>\chi(H)$, assume that $H$ is $F$-Tur\'an-stable and $F$-Tur\'an-good and $n$ is sufficiently large. Assume that $H$ is a forest
  and $\biex(n,F)=O(1)$. Then $\ex(n,H,F)=\cN(H,G)$ for some $G$ 
  with $\ex(n,F)=|E(G)|$.
 \end{thm}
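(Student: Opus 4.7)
The plan is to establish that $F$ must have a color-critical edge under the given hypotheses, whereupon the theorem reduces to Simonovits's classical extremal theorem. If $H$ is edgeless, then $\cN(H,G) = \binom{n}{|V(H)|}$ for every $n$-vertex graph and the conclusion holds trivially, so I will assume $H$ has at least one edge.

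I first observe that for any edge $e = uv$ added inside a single part of $T(n,r)$, strictly more copies of $H$ exist in $T(n,r) + e$ than in $T(n,r)$. Picking $h_0 \in E(H)$ and mapping it to $e$, the complement $H - h_0$ is a forest whose two new subtree components (from the tree of $H$ containing $h_0$) are rooted at $u$ and $v$. Since each of $u, v$ has $(1 - 1/r)n - o(n)$ neighbors in the other $r-1$ parts of $T(n,r)$, I can greedily embed each rooted subtree (and also the remaining components of $H$) to produce $\Theta(n^{|V(H)|-2})$ new copies of $H$ using $e$. Since $H$ is $F$-Turán-good, $\cN(H, T(n,r)) = \ex(n,H,F)$, and the strict inequality forces $T(n,r) + e$ to contain a copy of $F$.

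Any embedding $\phi : F \hookrightarrow T(n,r) + e$ must use $e$ (otherwise $F$ would embed into the $r$-chromatic $T(n,r)$, contradicting $\chi(F) = r+1$), so some $f \in E(F)$ satisfies $\phi(f) = e$. The preimage partition $\phi^{-1}(A_1), \ldots, \phi^{-1}(A_r)$ is then a proper $r$-coloring of $F - f$, showing $\chi(F - f) \le r < \chi(F)$, i.e., $f$ is a color-critical edge of $F$. By Simonovits's classical theorem, for $n$ sufficiently large, $T(n,r)$ is the unique $F$-free graph attaining $\ex(n,F) = |E(T(n,r))|$ edges, so taking $G = T(n,r)$ yields $\cN(H,G) = \ex(n,H,F)$ by the $F$-Turán-goodness hypothesis, as required.

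The main obstacle is the counting claim in the second paragraph, which relies essentially on $H$ being a forest so that $H - h_0$ decomposes into trees admitting flexible greedy embeddings into the multipartite ambient. Under this approach, the hypotheses that $H$ is $F$-Turán-stable and $\biex(n,F) = O(1)$ are not used directly; they are consistent with but do not drive the argument, arising naturally as consequences of the color-critical-edge conclusion (the former via the stability framework of Theorem \ref{thm1} applied at $T(n,r)$, the latter because a color-critical edge of $F$ gives $K_2 \in \cD(F)$ and thus $\biex(n,F) = 0$).
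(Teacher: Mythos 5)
Your argument is internally sound for the theorem as literally written, but it proves the statement only by showing that its hypotheses degenerate: you correctly observe that if $H$ is a forest with at least one edge and is $F$-Tur\'an-good in the strict sense ($\ex(n,H,F)=\cN(H,T(n,r))$), then $T(n,r)+e$ must contain $F$ for every edge $e$ added inside a part, so $F$ has a color-critical edge, $\biex(n,F)=0$, and $G=T(n,r)$ works trivially. The catch is that this same observation shows the hypotheses are never satisfied in the cases the theorem is actually aimed at. The whole point of assuming only $\biex(n,F)=O(1)$ (rather than $\biex(n,F)=0$) is to cover graphs such as $F=B_{r,1}$, where $\ex(n,F)=e(T(n,r))+1$ and the edge-extremal graph is \emph{not} the Tur\'an graph; for such $F$ your own computation shows that no forest with an edge can be strictly $F$-Tur\'an-good, since $T(n,r)+e$ is $F$-free and has more copies of $H$. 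The remark immediately after the theorem (``$T$ is not necessarily the Tur\'an graph'') and the structure of the paper's proof make clear that the goodness hypothesis is only meant to be used to compare complete $r$-partite graphs with $T(n,r)$, not to pin $\ex(n,H,F)$ to $\cN(H,T(n,r))$. So what you have found is really an imprecision in the statement; under the intended (weaker) hypothesis your reduction collapses at the very first step, because you can no longer conclude that $T(n,r)+e$ contains $F$.

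The paper's proof is genuinely different and survives in the intended setting. It argues by contradiction: if the $\cN(H,\cdot)$-extremal graph $G_0$ has fewer than $\ex(n,F)$ edges, then by Theorem \ref{thm1} it is a complete $r$-partite graph $T$ plus $a$ edges inside parts minus $b$ edges between parts (with $a,b=O(1)$ and $T$ almost balanced by full stability), while an edge-extremal graph $G_1$ is $T(n,r)$ plus $a'$ minus $b'$ edges with $a-b<a'-b'$. Since $H$ is a forest, every edge of $H$ is a cut edge, so Lemma \ref{lemmanew} shows each added or deleted edge accounts for $(1+o(1))f(n)$ copies of $H$, and the chain $G_0\to T\to T(n,r)\to G_1$ gains $(1+o(1))(a'-b'+b-a)f(n)>0$ copies, a contradiction. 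If you want a proof of the theorem that has nontrivial content, you need an exchange argument of this type rather than the color-critical-edge reduction.
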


Note that $H$ being $F$-Tur\'an-stable implies that $H$ is weakly $F$-Tur\'an-good, and the extremal complete $r$-partite graph $T$ is almost balanced, but $T$ is not necessarily the Tur\'an graph.

Given $F$ with chromatic number $r+1$, let us call an $F$-free $n$-vertex graph $G$ \textit{nice} for $F$ if $G$ contains $T(n,r)$ and has $\ex(n,F)$ edges.

\begin{thm}\label{bipa} Let $\chi(F)=3$, $\chi(H)=2$, assume that $H$ is $F$-Tur\'an-stable and $F$-Tur\'an-good, $\biex(n,F)=O(1)$ and $n$ is sufficiently large. If there is a nice graph for $F$, then there is a nice graph $G_0$ for $F$ with $\ex(n,H,F)=\cN(H,G_0)$.
\end{thm}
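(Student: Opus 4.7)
The plan is to derive the conclusion from a short sandwich argument that uses only the $F$-Tur\'an-good hypothesis together with the defining properties of a nice graph. By assumption, a nice graph for $F$ exists; let $G_0$ be any such graph. Then $G_0$ is an $n$-vertex $F$-free graph with $\ex(n,F)$ edges that contains a spanning copy of $T(n,2)$ as a subgraph.

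Using this spanning copy of $T(n,2)$ inside $G_0$, every copy of $H$ that lies in that $T(n,2)$ is also a copy of $H$ in $G_0$, so
\[
\cN(H,G_0)\ge \cN(H,T(n,2)).
\]
Since $H$ is $F$-Tur\'an-good and $n$ is sufficiently large, $\cN(H,T(n,2))=\ex(n,H,F)$. On the other hand, $G_0$ is an $n$-vertex $F$-free graph, so by the definition of the generalized Tur\'an number, $\cN(H,G_0)\le \ex(n,H,F)$. Chaining these three relations forces equality throughout, whence $\cN(H,G_0)=\ex(n,H,F)$. Thus the nice graph $G_0$ provided by the hypothesis already serves as the graph asserted by the theorem.

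No real obstacle arises. The assumptions that $H$ is $F$-Tur\'an-stable and that $\biex(n,F)=O(1)$ are not invoked in the argument above; they are natural companions from the setting of Theorem \ref{thm3}, providing the regime in which nice extremal graphs are the canonical candidates. Their heavier structural machinery would be relevant only if one wanted to prove the existence of a nice graph, or to characterize all $n$-vertex $F$-free graphs attaining $\ex(n,H,F)$. Since the statement merely asks one to exhibit a single nice graph achieving the maximum, the existence hypothesis together with $F$-Tur\'an-goodness is enough.
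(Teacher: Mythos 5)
Your sandwich argument is logically valid against the literal statement: a nice graph $G_0$ contains a spanning copy of $T(n,2)$, so $\cN(H,G_0)\ge\cN(H,T(n,2))=\ex(n,H,F)$ by the exact $F$-Tur\'an-good hypothesis, while $F$-freeness of $G_0$ gives the reverse inequality. This is, however, a genuinely different route from the paper's, which never uses the exact equality $\ex(n,H,F)=\cN(H,T(n,2))$ in this direct way. The paper instead starts from an arbitrary extremal graph $G$ for $\ex(n,H,F)$, invokes the structural description of Theorem \ref{thm1} (this is where stability and $\biex(n,F)=O(1)$ enter), and compares, for a fixed edge $uv$ of $H$, how many copies of $H$ pass through edges of the underlying complete bipartite graph versus through the $O(1)$ edges inside parts; the key point is a parity observation (an even cycle of $G$ using one edge inside a part must use a second one), so the inside edges carry only $O(n^{|V(H)|-3})$ copies. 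That exchange argument is what actually explains why the theorem's hypotheses can hold simultaneously in a non-vacuous way: for instance, for a tree $H$ with an edge, a nice graph with an edge inside a part would contain strictly more copies of $H$ than $T(n,2)$, so exact $F$-Tur\'an-goodness would fail and your argument would apply only vacuously; the interesting instances are those (like $H=C_4$, $F=F_2$) where every edge of $H$ lies on a cycle and the parity mechanism makes the extra edges of a nice graph invisible to $H$. In short, your proof is correct and much shorter, but it leans entirely on the exactness of the goodness hypothesis and would collapse if that were weakened to an asymptotic or "weakly good" form, whereas the paper's argument is robust to such weakenings and yields the additional structural information that extremal graphs for $\ex(n,H,F)$ can always be exchanged into nice ones. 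Your own remark that stability and $\biex(n,F)=O(1)$ go unused is the telltale sign that the two proofs approach the statement from opposite ends.
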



An even simpler special case is when $\biex(n,F)=1$, i.e., when $\cD(F)$ contains both the star and the matching with two edges. The main example is $B_{r,1}$. Another example is the following graph $Q_r$. Let $r\ge 3$. We take $K_{1,2,a_3,\dots,a_{r+1}}$ with parts $A_i$ of order $a_i\ge 2$, and remove all but two independent edges between $A_r$ and $A_{r+1}$. 
Let $\cG_m$ denote the family of graphs obtained the following way. We take a complete $r$-partite graph $T$, add one edge $u_iv_i$ into each of $m$ parts, and delete the edges $u_iu_j$ and $v_iv_j$. Let $G_m$ denote the element of $\cG_m$ where $T=T(n,r)$ and the $m$ additional edges are placed into $m$ smallest parts.

\begin{thm}\label{fan}

\textbf{(i)} Let $H$ be weakly $Q_r$-Tur\'an-stable. Then $\ex(n,H,Q_r)=\cN(H,G)$ for some $G\in \cG_m$, $m\le r$. In particular, if  $q:=(r-1)/2(r-k+1)$ is not an integer, then $\ex(n,K_k,Q_r)=\cN(K_k,G_{\lceil q\rceil})$. If $q$ is an integer, then either $\ex(n,K_k,Q_r)=\cN(K_k,G_{q})$ or $\ex(n,K_k,Q_r)=\cN(K_k,G_{q+1})$.

\textbf{(ii)} Let $H$ be $B_{r,1}$-Tur\'an-stable. Then $\ex(n,H,B_{r,1})=\cN(H,G')$, for some $G'$ that is obtained from a complete $r$-partite graph by adding an edge into one of the parts.
\end{thm}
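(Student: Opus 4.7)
The plan for both parts is to apply Theorem~\ref{thm1}(i) and then refine to the exact structure described. Preliminarily, both $Q_r$ and $B_{r,1}$ possess a color-critical vertex (the unique vertex of $A_1$ in the defining partition of $Q_r$; the shared vertex of $B_{r,1}$), so the structural conclusion of Theorem~\ref{thm1}(i) holds with the exceptional set $B=\emptyset$. The decomposition family of either graph contains $K_{1,2}$ and $2K_2$: restricting a proper coloring of $Q_r$ to $\{A_1,A_2\}$ yields $K_{1,2}$ and to $\{A_r,A_{r+1}\}$ yields $2K_2$, while for $B_{r,1}$, restricting to the apex color together with one other color yields $K_{1,2}$ and restricting to two non-apex colors yields $2K_2$. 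Hence $\biex(n,F)=1$, so each part of the partition from Theorem~\ref{thm1}(i) carries at most one edge, and every vertex has all but $o(n)$ neighbors in each other part.

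For part~(i), call the edge in part $A_i$ (if any) the special edge $u_iv_i$, and let $m$ be the number of such parts. The first step is to show that the only non-edges of $G$ between distinct parts are $\{u_iu_j,v_iv_j:1\le i<j\le m\}$. For necessity, if $u_iu_j$ were present in $G$, an explicit embedding of $Q_r$ would exist: map two vertices of $A_r(Q_r)\cup A_{r+1}(Q_r)$ to the special edge $u_iv_i$ and two others so that the two matching edges of $Q_r$ between $A_r$ and $A_{r+1}$ are realized by $u_iv_i$ and a cross-part edge, while the apex vertex of $A_1(Q_r)$ is placed at $u_j$ and the edge $v$--$p_1$ of $Q_r$ is realized by the cross-edge $u_iu_j$ of $G$; the remaining parts of $Q_r$ embed into further parts of $G$ via the complete multipartite structure. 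Symmetrically, $v_iv_j$ must also be deleted. For sufficiency, a case analysis of embeddings of $Q_r$ into $\cG_m$-graphs, using that $Q_r$ has $r+1$ parts while $G$ has $r$, shows that any embedding requires at least one edge of the forbidden form $u_iu_j$ or $v_iv_j$, which is absent. For the clique case $H=K_k$, direct counting gives
\[
\cN(K_k,G_m)=\cN(K_k,T(n,r))+\Bigl[m\binom{r-1}{k-2}-m(m-1)\binom{r-2}{k-2}\Bigr]\Bigl(\tfrac{n}{r}\Bigr)^{k-2}+O(n^{k-3}),
\]
a concave quadratic in $m$ with continuous optimum $m^{*}=\tfrac12+q$, using $\binom{r-1}{k-2}/\binom{r-2}{k-2}=(r-1)/(r-k+1)$. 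The nearest integer to $m^{*}$ is $\lceil q\rceil$ when $q$ is not an integer, and is tied between $q$ and $q+1$ when $q$ is an integer. Placing the edges in the smallest parts (as in $G_m$) is optimal among $\cG_m$, since both the gain and loss terms depend on the sizes of the parts \emph{other} than the edge-carrying ones.

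Part~(ii) follows the same structural route. The new input is that two special edges $u_iv_i\in A_i$ and $u_jv_j\in A_j$ in distinct parts already yield $B_{r,1}$: each special edge together with one vertex from each of the remaining parts forms a copy of the relevant clique, and for $n$ large enough one can arrange the shared choices so that the two cliques share exactly one vertex. Hence $m\le 1$ and $G$ has the claimed form. The principal technical hurdle throughout is bridging Theorem~\ref{thm1}(i)'s approximate structure to the exact form — in particular, the case analysis showing that the listed deletions in part~(i) are both necessary for $Q_r$-freeness and sufficient, so that the crossed cross-edges $u_iv_j,v_iu_j$ need not be deleted.
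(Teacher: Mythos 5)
Your overall strategy (invoke Theorem~\ref{thm1}, use that $Q_r$ and $B_{r,1}$ have a color-critical vertex so $B=\emptyset$ and $\biex(n,F)=1$, then analyze the cross-edges between the special pairs and optimize the clique count) is the paper's strategy, and your clique optimization with continuous optimum $q+\tfrac12$ matches the paper's computation. But there are three concrete gaps. First, your necessity claim in (i) is false as stated: the presence of $u_iu_j$ alone does not force a copy of $Q_r$. Indeed, the configuration where $u_iu_j$ and $v_iv_j$ are present but $u_iv_j$ and $v_iu_j$ are absent is isomorphic (swap the labels $u_j\leftrightarrow v_j$) to the $\cG_m$ configuration, hence $Q_r$-free. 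Your proposed embedding places the apex at $u_j$, which requires $u_j$ to be adjacent to \emph{both} $u_i$ and $v_i$, and that does not follow from $u_iu_j\in E(G)$. What is actually true, and what the paper proves, is that no \emph{three} of the four cross-edges between $\{u_i,v_i\}$ and $\{u_j,v_j\}$ can be present; combined with extremality this forces exactly two present cross-edges forming a matching, which after relabeling is the $\cG_m$ pattern.

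Second, the $Q_r$-freeness of the graphs in $\cG_m$ (the lower bound in (i)) is not a routine observation; the paper's argument tracks where the apex of $Q_r$ can land, shows each part of $G$ receives at most two vertices of $Q_r$ whenever it receives a vertex with a unique non-neighbor, and derives a counting contradiction from distributing $2r$ vertices into at most $r-1$ parts. Writing ``a case analysis shows'' leaves the main content of that direction unproved. Third, your argument for (ii) fails when $r=2$: with two special edges in distinct parts of a bipartite graph there is no third part to host the shared vertex, and in fact the graph $G_2\in\cG_2$ with $r=2$ is $B_{2,1}$-free, so the claim that ``two special edges in distinct parts already yield $B_{r,1}$'' is false there. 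The paper handles $r=2$ separately by invoking Theorem~\ref{bipa}; your proof needs either that reduction or some other dedicated argument for the bipartite case.
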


The first statement of the above theorem gives an example where the extremal graph is not the same for $\ex(n,H,F)$ as for $\ex(n,F)$, even though both contain the Tur\'an graph. The second statement generalizes most of the known exact results when $\chi(H)<\chi(F)$ and $F$ does not have a color-critical edge.

Finally, we show two simple ways to obtain weakly $F$-Tur\'an-stable graphs. The \textit{odd girth} $\og(G)$ of a graph $G$ is the shortest odd cycle in it. Its connection to Tur\'an-goodness was studied in \cite{ger3}. It was shown there that $G$ is the subgraph of the $p$-blow-up of the cycle of length $\og(G)$ for some $p$, i.e., of the graph obtained the following way: we replace each vertex $v$ of $G$ with $p$ vertices $v_1,\dots,v_p$, and each edge $uv$ with $p^2$ edges $u_iv_j$, $i,j\le p$. Let $b(G)$ denote the smallest $p$ such that the above property holds.

\begin{thm}\label{stabil}
\textbf{(i)} Let $\chi(F)=r+1$ and $H$ be a weakly $F$-Tur\'an-stable graph. Assume that $H$ has a unique $r$-coloring and $H'$ is an $r$-chromatic graph obtained by adding edges but no vertices to $H$. Then $H'$ is  weakly $F$-Tur\'an-stable. 

\textbf{(ii)} Let $\chi(F)=3$ and $H$ be a weakly $F$-Tur\'an-stable graph. Let us assume that $H$ contains the $b(F)$-blow-up of $P_{\og(F)-1}$, where $u_1,\dots,u_{b(F)}$ replace the first vertex and $v_1,\dots,v_{b(F)}$ replace the last vertex of the path. Let $H'$ be the graph obtained by adding vertices $w_1,\dots,w_s,x_1,\dots x_t$ and edges $u_iw_j$, $v_ix_\ell$ for $i\le b(F)$, $j\le s$ and $\ell\le t$. Assume that $\binom{t-s}{2}\le s\le t$. Then $H'$ is $F$-Tur\'an-stable.
\end{thm}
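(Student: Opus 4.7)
For part (i), I would exploit that a unique $r$-coloring of $H$ forces the same unique $r$-coloring on $H'$: since $E(H)\subseteq E(H')$, any $r$-coloring of $H'$ restricts to one of $H$, so $H'$ inherits exactly the color classes of $H$. For any complete $r$-partite graph $T$, an injective map $V(H)\to V(T)$ is a homomorphism of $H$ iff it sends color classes into distinct parts, and the identical characterization holds for $H'$; hence $\hom_{\text{inj}}(H,T)=\hom_{\text{inj}}(H',T)$, i.e.\ $|\Aut(H)|\cN(H,T)=|\Aut(H')|\cN(H',T)$. Combined with the trivial $|\Aut(H')|\cN(H',G)\le|\Aut(H)|\cN(H,G)$ for any $G$ and the fact that weak $F$-Turán-stability of $H$ supplies $\ex(n,H,F)=\max_T\cN(H,T)+o(n^{|V(H)|})$ over complete $r$-partite $T$, this shows that an $F$-free $G$ with $\cN(H',G)\ge\ex(n,H',F)-o(n^{|V(H)|})$ satisfies $\cN(H,G)\ge\ex(n,H,F)-o(n^{|V(H)|})$, and weak stability of $H$ closes the argument.

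For part (ii), the key structural observation is the following. In an $F$-free graph $G$, for every copy $\phi$ of $H$ with distinguished tuples $u_1,\dots,u_{b(F)}$ and $v_1,\dots,v_{b(F)}$, write $U_\phi=\bigcap_i N_G(u_i)$ and $V_\phi=\bigcap_i N_G(v_i)$. Then $|U_\phi\cap V_\phi|\le b(F)-1$: otherwise $b(F)$ vertices in the intersection, together with the $b(F)$-blow-up of $P_{\og(F)-1}$ inside $H$, would form a $b(F)$-blow-up of $C_{\og(F)}$ in $G$, producing $F\subseteq G$. This gives $|U_\phi|+|V_\phi|\le n+b(F)-1$.

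Using the extension identity $|\Aut(H')|\cN(H',G)=\sum_\phi E_\phi$ where $E_\phi$ counts choices of $w_1,\dots,w_s\in U_\phi$ and $x_1,\dots,x_t\in V_\phi$, the plan is: (a) use the overlap bound, together with the assumption $\cN(H',G)\ge\ex(n,H',F)-o(n^{|V(H')|})$, to derive $\cN(H,G)\ge\ex(n,H,F)-o(n^{|V(H)|})$; (b) apply weak $F$-Turán-stability of $H$ to conclude $G$ has edit distance $o(n^2)$ from some complete bipartite $K_{a,n-a}$; (c) directly compute $\cN(H',K_{a,n-a})$ and, writing $\alpha=a/n$ with $p,q$ denoting the color-class sizes of $H$, expand the polynomial $g(\alpha)=\alpha^{p+t}(1-\alpha)^{q+s}+\alpha^{q+s}(1-\alpha)^{p+t}$ around $\alpha=1/2$. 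A routine expansion shows the quadratic coefficient is proportional to $\binom{(p-q)+(t-s)}{2}-(q+s)$, and the assumption $\binom{t-s}{2}\le s\le t$ forces this to be non-positive; hence near-maximality pins $\alpha$ to $1/2+o(1)$, so $G$ has edit distance $o(n^2)$ from $T(n,2)$.

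The main obstacle is step (a). The elementary bound $E_\phi\le|U_\phi|^s|V_\phi|^t$ subject to $|U_\phi|+|V_\phi|\le n+O(1)$ achieves a maximum of $s^st^t n^{s+t}/(s+t)^{s+t}$ per embedding of $H$, which strictly exceeds $(n/2)^{s+t}$ whenever $s\neq t$; so the crude extremal estimate loses the leading constant needed to conclude $\cN(H,G)\ge\cN(H,T(n,2))-o(n^{|V(H)|})$. A delicate bootstrap is needed: first extract a coarse bipartite approximation of $G$ from a weakened lower bound on $\cN(H,G)$, and then exploit this near-bipartiteness to argue that the realized $|U_\phi|$ and $|V_\phi|$ concentrate near the balanced value $n/2$ for almost all $\phi$, so that the loose arithmetic-geometric slack is in fact not used by $G$.
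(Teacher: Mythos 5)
Your part \textbf{(i)} is correct and is essentially the proof in the paper: the paper phrases the double counting via the number $p$ of ways to add the missing edges to a copy of $H$ lying in a complete $r$-partite graph and the number $q$ of copies of $H$ inside $H'$, which is exactly your identity $|\Aut(H)|\cN(H,T)=|\Aut(H')|\cN(H',T)$ together with the trivial inequality $|\Aut(H')|\cN(H',G)\le|\Aut(H)|\cN(H,G)$. Nothing more to add there.

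For part \textbf{(ii)} your proposal is incomplete, and you have correctly located where it breaks: step (a). But the ``delicate bootstrap'' you sketch is not the fix, and it is doubtful it can be made to work, because a near-extremal $G$ for $H'$ need not a priori be near-extremal for $H$ under your one-sided bound. The idea you are missing is that the extension count should be compared not with $|U_\phi|^s|V_\phi|^t$ but with the number of \emph{copies of $K_{s,t}$} in $K_{|U_\phi\setminus V_\phi|,\,|V_\phi\setminus U_\phi|}$, i.e.\ with the symmetrized quantity $\binom{a}{s}\binom{b}{t}+\binom{a}{t}\binom{b}{s}$ where $a+b\le n+b(F)-1$. The result of Brown and Sidorenko on the inducibility of complete bipartite graphs says that, subject to $a+b$ being fixed, this symmetrized count is asymptotically maximized at $a=b$ exactly under the hypothesis $\binom{t-s}{2}\le s\le t$ --- this is where that hypothesis enters, and it is invisible in your plan. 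With this, each copy of $H$ in an $F$-free $G$ extends to at most $(1+o(1))\cN(K_{s,t},T(n-|V(H)|,2))$ copies of $H'$, while in $T(n,2)$ one has $\cN(H',T(n,2))=(1+o(1))\cN(H,T(n,2))\cN(K_{s,t},T(n-|V(H)|,2))/q$ with the same normalization $q$; dividing the two relations gives $\cN(H,G)\ge(1-o(1))\cN(H,T(n,2))=\ex(n,H,F)-o(n^{|V(H)|})$ in one stroke, and the stability of $H$ finishes the proof. No bootstrap, and no separate optimization over $K_{a,n-a}$, is needed.

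Your step (c) is also not justified as written, and your own formula betrays this: you say the quadratic coefficient of $g$ at $\alpha=1/2$ is proportional to $\binom{(p-q)+(t-s)}{2}-(q+s)$, but the hypothesis $\binom{t-s}{2}\le s$ controls only the contribution of $t-s$; it does not force $\binom{(p-q)+(t-s)}{2}\le q+s$ when the color classes of $H$ are unbalanced ($p\ne q$). So even granting steps (a) and (b), your route does not pin $\alpha$ to $1/2$ in general. The paper avoids this global polynomial optimization entirely by working per copy of $H$ and then invoking the Tur\'an-stability of $H$ itself (note that the last step of the paper's argument really uses that $T(n,2)$ is asymptotically optimal for $H$, i.e.\ the non-weak form of stability).
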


We remark that \textbf{(i)} is a straightforward extension of a proposition from \cite{ger2}, which states the same result with weakly $F$-Tur\'an-good instead of weakly $F$-Tur\'an-stable. Observe that in \textbf{(ii)}, if $F=K_3$, then we just add leaves to the endpoints of an edge.

\section{Proofs}

We will use the following lemma from \cite{gerb2}.

\begin{lemma}\label{neww}
Let us assume that $\chi(H)<\chi(F)$ and $H$ is weakly $F$-Turán-stable, thus $\ex(n,H,F)=\cN(H,T)+o(n^{|V(H)|})$ for some complete $(\chi(F)-1)$-partite $n$-vertex graph $T$. Then every part of $T$ has order $\Omega(n)$.
\end{lemma}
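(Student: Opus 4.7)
The plan is to reformulate the question as a polynomial optimization on a simplex, and show that the relevant polynomial attains its maximum only in the interior. Set $r=\chi(F)-1$ and parametrize a complete $r$-partite graph $T$ with part sizes $a_1,\dots,a_r$ by $\alpha_i=a_i/n\in S$, where $S=\{\alpha\in\mathbb{R}^r:\sum_i\alpha_i=1,\,\alpha_i\ge 0\}$. Then
\[
\cN(H,T)=Q(\alpha)\,n^{|V(H)|}+O(n^{|V(H)|-1}),\qquad Q(\alpha):=\frac{1}{|\Aut(H)|}\sum_{\substack{f:V(H)\to[r]\\ f\text{ proper}}}\prod_{i=1}^r\alpha_i^{n_i(f)},
\]
where $n_i(f)=|f^{-1}(i)|$; the polynomial $Q$ is symmetric in the $\alpha_i$ and homogeneous of degree $|V(H)|$. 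Any complete $r$-partite graph is $F$-free (being $r$-chromatic while $\chi(F)=r+1$), so taking parts $\lfloor\alpha_i^\star n\rfloor$ for a global maximizer $\alpha^\star\in S$ of $Q$ gives $\ex(n,H,F)\ge Q(\alpha^\star)n^{|V(H)|}-o(n^{|V(H)|})$. Combined with the hypothesis $\ex(n,H,F)=\cN(H,T)+o(n^{|V(H)|})$, this yields
\[
Q(\alpha)\ge Q(\alpha^\star)-o(1).
\]

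The heart of the proof is the claim that \emph{every global maximizer of $Q$ on $S$ has all coordinates strictly positive.} Granted this, the set of maximizers is a closed subset of $S$ contained in the open simplex, hence bounded away from $\partial S$ by some $\delta>0$; continuity of $Q$ on the compact $S$ then yields $\eta>0$ such that $Q(\alpha)\ge\max Q-\eta$ implies $\min_i\alpha_i\ge\delta/2$. For $n$ sufficiently large, $o(1)<\eta$, and we conclude $a_i\ge(\delta/2)n=\Omega(n)$ for every $i$, proving the lemma.

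To establish the claim, suppose for contradiction $\alpha^\star$ is a maximizer with $\alpha^\star_r=0$ (WLOG by symmetry of $Q$). If $\chi(H)=r$, every proper coloring of $H$ uses all $r$ colors, so $Q$ vanishes identically on the face $\{\alpha_r=0\}$ while $Q$ is positive at the barycentre---a direct contradiction. If $\chi(H)\le r-1$, I consider the perturbation $\tilde\alpha(\epsilon)=(1-\epsilon)\alpha^\star+\epsilon e_r\in S$. Separating proper colorings $f$ by the value of $n_r(f)\in\{0,1,\ge 2\}$ (with the last contributing only $O(\epsilon^2)$), a direct expansion gives
\[
\tfrac{d}{d\epsilon}Q(\tilde\alpha(\epsilon))\Big|_{\epsilon=0}=\tfrac{1}{|\Aut(H)|}\sum_{v\in V(H)}W^{(H-v)}(\alpha^\star)-|V(H)|\,Q(\alpha^\star),
\]
where $W^{(H-v)}(\alpha):=\sum_{g:V(H-v)\to[r-1]\text{ proper}}\prod_i\alpha_i^{n_i(g)}$. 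A double-counting identity---extending a proper coloring $g$ of $H-v$ to $H$ by assigning $v$ any colour in $[r-1]\setminus g(N(v))$ and using $\sum_{c<r}\alpha_c^\star=1$---yields
\[
W^{(H-v)}(\alpha^\star)=|\Aut(H)|\,Q(\alpha^\star)+R_v(\alpha^\star),\qquad R_v(\alpha^\star):=\sum_g\Bigl(\prod_i\alpha_i^{\star n_i(g)}\Bigr)\sum_{c\in g(N(v))}\alpha_c^\star\ge 0,
\]
with $R_v>0$ whenever $v$ has a neighbor in $H$ and the surviving coordinates of $\alpha^\star$ are positive. Summing over $v$, the derivative simplifies to $\frac{1}{|\Aut(H)|}\sum_v R_v$, which is strictly positive because $H$ has at least one edge (an edgeless $H$ would satisfy $\cN(H,G)=\binom{n}{|V(H)|}$ for every $G$, incompatible with weak $F$-Tur\'an-stability). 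This contradicts maximality. If $\alpha^\star$ has several zero coordinates, I iterate the same argument on the corresponding lower-dimensional face of $S$.

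The principal obstacle is the derivative computation and the double-counting identity for $W^{(H-v)}$; once those are in place, the strict positivity of the derivative rests on the single graph-theoretic fact that $H$ has an edge, and the passage from ``every maximizer is interior'' to ``every part of $T$ has linear size'' is routine compactness.
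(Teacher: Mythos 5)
Your proof is correct, but note that the paper never proves this lemma itself: it is quoted from \cite{gerb2}, so what you have written is a self-contained replacement for a citation rather than an alternative to an in-text argument. Your route --- reducing to the maximization of the homogeneous polynomial $Q$ on the simplex, showing every maximizer is interior via the inward perturbation $\tilde\alpha(\epsilon)=(1-\epsilon)\alpha^\star+\epsilon e_r$, and converting that to $a_i=\Omega(n)$ by compactness --- is sound. I verified the derivative formula, the identity $W^{(H-v)}(\alpha^\star)=\lvert\Aut(H)\rvert\,Q(\alpha^\star)+R_v(\alpha^\star)$ (which uses $\sum_{c<r}\alpha^\star_c=1$ and the fact that colorings of $H$ avoiding colour $r$ are exactly those contributing to $Q(\alpha^\star)$ when $\alpha^\star_r=0$), and the cancellation leaving $\tfrac{1}{\lvert\Aut(H)\rvert}\sum_v R_v$. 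Two small points to tighten. First, the strict positivity $R_v>0$ needs the observation, which you use only implicitly, that a maximizer satisfies $Q(\alpha^\star)>0$ (true since $\chi(H)\le r$ makes $Q$ positive at the barycentre), so that some proper colouring of $H$, hence of $H-v$, is supported on the positive coordinates of $\alpha^\star$; with that in hand the final ``iterate on lower faces'' step is superfluous, since the directional derivative toward $e_r$ is already strictly positive no matter how many other coordinates vanish. Second, the dismissal of edgeless $H$ deserves one explicit line (for $\chi(F)=3$, say, a balanced blow-up of $C_5$ is $F$-free, extremal for edgeless $H$, and $\Theta(n^2)$-far from every complete bipartite graph, so weak stability fails); this is a degenerate case and does not affect the substance. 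What your argument buys over the citation is transparency: the only graph-theoretic input is that $H$ has an edge and $\chi(H)\le r$, and the quantitative conclusion drops out of a single compactness step.
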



We will use the following simple observation (and variations of it) multiple times.

\begin{lemma}\label{greed}
Let $G$ be an $n$-vertex graph with a partition of $V(G)$ to $A_1,\dots,A_r$ such that $|A_i|=\Theta(n)$ for each $i$. Assume that each vertex of $A_i$ is connected to all but $o(n)$ vertices outside $A_i$. Let $F'$ be an induced subgraph of $F$ such that the remaining part of $F$ is $s$-colorable. Assume that a copy of $F'$ is embedded into $G$ avoiding $A_1,\dots, A_s$. Then we can extend this copy of $F'$ to a copy of $F$ in $G$.
\end{lemma}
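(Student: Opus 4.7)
The plan is a standard greedy vertex-by-vertex extension. Let $W = V(F)\setminus V(F')$. Since $F[W]$ is $s$-colorable, I fix a proper $s$-coloring of $F[W]$ and write $W = C_1 \cup \dots \cup C_s$, where each $C_i$ is independent in $F$. I propose to embed every vertex of $C_i$ into $A_i$, processing the vertices of $W$ one by one in an arbitrary order. Because each $C_i$ is $F$-independent, I will never need two vertices embedded into the same $A_i$ to be joined in $G$, which is crucial since the hypothesis only controls edges going between the parts, not inside them.

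Before running the greedy step, I would record the following immediate consequence of the degree hypothesis: for every $u \in A_k$ and every $i \ne k$, the non-neighbors of $u$ inside $A_i$ are a subset of the non-neighbors of $u$ outside $A_k$, so there are only $o(n)$ of them. Equivalently, each single vertex located outside $A_i$ rules out only $o(n)$ candidate images inside $A_i$.

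Now for the greedy step. Suppose $v \in C_i$ is the next vertex to be placed, and let $S$ be the set of already-used vertices, namely the image of $F'$ together with the previously embedded $C_j$-vertices; then $|S| \le |V(F)|-1 = O(1)$. Every $F$-neighbor of $v$ lying in $S$ has been placed outside $A_i$: neighbors coming from $V(F')$ lie in $A_{s+1}\cup\dots\cup A_r$ by the avoidance hypothesis, and neighbors coming from $W$ lie in some $A_{j}$ with $j\ne i$ because $C_i$ is $F$-independent. By the observation above, each such neighbor forbids at most $o(n)$ vertices of $A_i$, and at most $|S|$ further vertices are excluded because they are already used. So the total number of forbidden choices inside $A_i$ is at most $O(1)\cdot o(n) + |S| = o(n) < |A_i| = \Theta(n)$, and a valid image for $v$ exists. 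Iterating yields the desired copy of $F$.

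There is essentially no serious obstacle: the lemma is tailor-made so that the independence of each $C_i$ removes any need for edges inside the parts, and the robust $o(n)$ non-neighbor bound then trivially accommodates the $O(1)$ previously placed vertices one at a time. The only point worth flagging is the interaction between the $s$-coloring of $W$ and the avoidance condition on $F'$: together they guarantee that every required $F$-neighbor of a new vertex $v \in C_i$ lies in some $A_k$ with $k \ne i$, which is precisely the setting where the degree hypothesis can be applied.
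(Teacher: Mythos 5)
Your proof is correct and follows essentially the same greedy argument as the paper: fix an $s$-coloring of $F - V(F')$, embed the $i$-th color class into $A_i$, and note that at each step the $O(1)$ already-placed vertices (all lying outside $A_i$) exclude only $o(n)$ of the $\Theta(n)$ candidates. Your version just spells out more explicitly the two points the paper leaves implicit, namely that the independence of each color class means no edges inside a part are ever needed, and that the avoidance hypothesis on $F'$ keeps all relevant neighbors outside $A_i$.
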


\begin{proof}
Let $U_1\dots, U_s$ be the color classes of the graph we obtain by deleting $F'$ from $F$. We will embed the sets $U_i$ one by one in an arbitrary order into $A_i$. When we embed $U_i$, the already embedded at most $|V(F)|$ vertices are adjacent to all the $\Theta(n)$ vertices in $A_i$ with $o(n)$ exceptions, thus we can pick the necessary vertices. 
\end{proof}

Now we are ready to prove Theorem \ref{thm1} that we restate here for convenience. 

\begin{thm*}
\textbf{(i)} Let $r+1=\chi(F)>\chi(H)$ and assume that $H$ is weakly $F$-Tur\'an-stable. Then $\ex(n,H,F)\le\cN(H,T)+\biex(n,F)\Theta(n^{|V(H)|-2})$ for some $n$-vertex complete $r$-partite graph $T$. 
Moreover, for every $n$-vertex $F$-free graph $G$ with $\cN(H,G)=\ex(n,H,F)$ there is an $r$-partition of $V(G)$ to $A_1,\dots,A_r$, a constant $K=K(F)$ and a set $B$ of at most $rK(\sigma(F)-1)$ vertices such that each member of $\cD(F)$ inside a part shares at least two vertices with $B$, every vertex of $B$ is adjacent to $\Omega(n)$ vertices in each part, and every vertex of $A_i\setminus B$ is adjacent to $o(n)$ vertices in $A_i$ and all but $o(n)$ vertices in $A_j$ with $j\neq i$.

\smallskip

\noindent\textbf{(ii)} $\ex(n,H,F)=\cN(H,T)+\biex(n,F)\Theta(n^{|V(H)|-2})$ if there is a connected component $H'$ of $H$ such that one of the followings hold.

\textbf{(a)} $H'$ has a coloring with at most $r$ colors that is almost proper: there is exactly one edge whose endpoints have the same color.

\textbf{(b)} $\biex(n,H')=o(\biex(n,F))$.
\end{thm*}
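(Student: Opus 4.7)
The plan is to push through the standard stability-to-exact template used in Theorems \ref{maqi} and \ref{haszn}, but while keeping careful track of the inside-part edges so as to match the $\biex(n,F)$ term. For (i), I would start with an extremal $F$-free graph $G$ with $\cN(H,G)=\ex(n,H,F)$ and invoke the weak $F$-Tur\'an-stability hypothesis to obtain a complete $r$-partite graph $T^*$ on $V(G)$ with edit distance $o(n^2)$ from $G$; by Lemma \ref{neww}, each class $A_i$ has size $\Theta(n)$. A standard vertex-shifting step -- move each vertex $v$ to the part that maximises $\cN(H,G)$, and iterate -- will yield a partition that is locally extremal for the count of $H$. Call the stabilised parts $A_1,\dots,A_r$, and note that the $o(n^2)$ edit-distance bound forces the set $B_0$ of vertices missing $\Omega(n)$ neighbours across some part to satisfy $|B_0|=o(n)$.

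The crux of (i) is then to cut $B_0$ down to a constant-size set $B$ of vertices that have $\Omega(n)$ neighbours \emph{in every} part. The key claim is that any $D\in\cD(F)$ lying inside some $A_i$ shares at least two vertices with $B$: otherwise at most one vertex of $D$ is atypical, so the remaining vertices of $D$ have all but $o(n)$ neighbours in each $A_j$ with $j\ne i$, and a mild variant of Lemma \ref{greed} (allowing one extension step to be performed in the common $\Omega(n)$-neighbourhood of a single $B$-vertex) extends $D$ to a copy of $F$, contradicting $F$-freeness. The bound $|B|\le rK(\sigma(F)-1)$ is obtained by an analogous extension argument: were $|B\cap A_i|$ to exceed $K(\sigma(F)-1)$ for a suitably large $K=K(F)$, a K\H{o}v\'ari--S\'os--Tur\'an-type estimate on common neighbourhoods inside $A_i$ would produce a $\cD(F)$-configuration based on a colour class of size at most $\sigma(F)-1$ inside $A_i$, which one then completes by Lemma \ref{greed}. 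Once $|B|$ is bounded, the inside-part graph on $V(G)\setminus B$ is $\cD(F)$-free and so has at most $r\cdot\biex(n,F)$ edges; edges inside parts incident to $B$ contribute only $O(n)$; and since each inside-part edge lies in at most $O(n^{|V(H)|-2})$ copies of $H$, one obtains $\cN(H,G)\le \cN(H,T^*)+O\bigl(\biex(n,F)\,n^{|V(H)|-2}\bigr)$, which is the upper bound; the structural conclusions about $B$ and $A_i\setminus B$ fall out of the same analysis.

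For (ii), I would supply the matching lower bound by an explicit construction: take a complete $r$-partite graph $T$ on $V$ (with part sizes chosen to maximise $\cN(H,T)$) and embed into one part $A_1$ a $\cD(F)$-free graph $D_0$ with $\biex(n,F)$ edges. A colour-class inspection shows $T\cup D_0$ is $F$-free: a copy of $F$ would force one colour class of its $(r+1)$-colouring to land entirely inside $A_1$, making the corresponding bipartite subgraph of $D_0$ a member of $\cD(F)$, contradiction. Under hypothesis (a), place the unique monochromatic edge of the almost-proper $r$-colouring of $H'$ onto any edge of $D_0$ and distribute the remaining vertices of $H'$ across $A_2,\dots,A_r$ according to the colouring; this yields $\Theta(n^{|V(H')|-2})$ embeddings of $H'$ per edge of $D_0$, and the other components of $H$ contribute an additional $\Theta(n^{|V(H)|-|V(H')|})$ factor, for a total of $\biex(n,F)\Theta(n^{|V(H)|-2})$ extra copies. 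Under hypothesis (b), a greedy extraction argument (repeatedly delete an edge of a $\cD(H')$-member in $D_0$ until none remain) produces $\Omega(\biex(n,F)-\biex(n,H'))=\Omega(\biex(n,F))$ distinct $\cD(H')$-members inside $D_0$; each extends to a copy of $H'$ in $T\cup D_0$ by Lemma \ref{greed}, and one finishes as in case (a).

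The main obstacle is the pair of structural assertions in (i): the bound $|B|\le rK(\sigma(F)-1)$ and the covering condition that every inside-part member of $\cD(F)$ meets $B$ in at least two vertices. Both are packaged applications of Lemma \ref{greed}, but they require a version of that lemma tolerating one slightly-weakened vertex (a single $B$-vertex with $\Omega(n)$ rather than $(|A_j|-o(n))$ neighbours in each part), and obtaining the exact dependence on $\sigma(F)$ demands a careful inductive embedding that peels the colour classes of $F$ off one at a time. The remaining parts of the argument -- the vertex-shifting, the final edge count, and the constructions used in (ii) -- are bookkeeping built on Lemma \ref{greed} and the standard stability toolkit.
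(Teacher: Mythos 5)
Your overall template for (i) --- stability, structural cleanup, then an edge count --- is the paper's, and your part (ii) is essentially the paper's construction (place a $\cD(F)$-free extremal graph into one part; in case (a) map the unique monochromatic edge of $H'$ onto an added edge; in case (b) greedily extract $\Omega(\biex(n,F))$ edges lying in members of $\cD(H')$ and extend each by Lemma \ref{greed}). But there is a genuine gap in (i): you never eliminate the vertices that are missing $\Omega(n)$ cross-neighbours. The edit-distance bound only gives $|B_0|=o(n)$, and no K\H{o}v\'ari--S\'os--Tur\'an or supersaturation argument can shrink this set to a constant, because a vertex may have few neighbours inside its own part \emph{and} few neighbours in the other parts --- such a vertex is invisible to any argument based on large common neighbourhoods, yet it violates the pointwise conclusion ``every vertex of $A_i\setminus B$ is adjacent to all but $o(n)$ vertices of $A_j$.'' The paper handles this with a symmetrization step absent from your proposal: for each such vertex $v$, delete its inside-part edges and join it to all typical vertices of the other parts, check via a minimal-counterexample replacement argument that the modified graph is still $F$-free, and observe that the number of copies of $H$ strictly increases, contradicting extremality of $G$; hence no such vertex exists. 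Without this step, your key claim that every inside-part member of $\cD(F)$ meets $B$ twice is circular (it assumes the vertices of $D$ outside $B$ already have all but $o(n)$ cross-neighbours), and likewise your assertion that the inside-part graph on $V(G)\setminus B$ is $\cD(F)$-free rests on an unproved typicality property.

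Two smaller points. First, your bound on copies of $H$ meeting $B$ comes to $O(n^{|V(H)|-1})$, which is $O(\biex(n,F)\,n^{|V(H)|-2})$ only after the case distinction the paper makes: either $\sigma(F)=1$ and $B=\emptyset$, or $\sigma(F)\ge 2$ and $\biex(n,F)=\Omega(n)$; you need to say this. Second, for $|B|\le rK(\sigma(F)-1)$ the paper applies Erd\H{o}s--Simonovits supersaturation to find $\Omega(n^{br})$ copies of $K_{b,\dots,b}$ spanning all $r$ parts inside the neighbourhood of each $B$-vertex and then double-counts against the fact that no such copy extends $\sigma(F)$ vertices of $B$; your sketch is in the same spirit and workable, but the relevant common neighbourhoods live across all parts, not ``inside $A_i$'' as you write.
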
 

\begin{proof}

To prove \textbf{(i)}, we will pick the numbers $\alpha,\beta,\gamma,\varepsilon>0$ in this order, such that each is sufficiently small compared to the previous one, and after that we pick $n$ that is sufficiently large. Let us consider an $n$-vertex $F$-free graph $G$ with $\ex(n,H,F)$ copies of $H$. Because of the Tur\'an-stable property, for any $\varepsilon>0$ there is a complete $r$-partite graph $T$ on $V(G)$ that can be obtained from $G$ by adding and removing at most $\varepsilon n^2$ edges. Let us pick $T$ such that we need to remove the least number of edges and let $A_1,\dots, A_r$ be the parts of $T$. Note that by the choice of $T$, each vertex in $A_i$ has at least as many neighbors in every part as in $A_i$, and we have $|A_i|\ge \alpha n$ for some $\alpha>0$ using Lemma \ref{neww}. 

Let $\beta>0$ be sufficiently small and $\gamma>0$ be sufficiently small compared to $\beta$. Let $B_i$ denote the set of vertices in $A_i$ with at least $\gamma n$ neighbors in $A_i$ and $B=\cup_{i=1}^r B_i$.

\begin{clm} There is a $K$ depending on $\gamma$ and $F$ such that
$|B|\le K(\sigma(F)-1)$.
\end{clm}

\begin{proof} This is an extension of Claim 4.2 in \cite{mq}, and the proof also extends to our case, thus we only give a sketch here. Clearly $|B|\le\varepsilon n\gamma$ by the definition of $\varepsilon$. Therefore, $v\in B_i$ has at least $\gamma n-\varepsilon n\gamma\ge \gamma n /2$ in every $A_j\setminus B_j$. 

Let $G(v)$ denote the subgraph of $G$ we get by picking $\gamma n/2$ neighbors of $v$ from every $A_j\setminus B_j$. Then at most $\varepsilon n^2$ edges are missing in $G(v)$ between parts $A_j\setminus B_j$. Therefore, the edge density in $G(v)$ is larger than $(r-2)/(r-1)$ (as that is the edge density of the $(r-1)$-partite Tur\'an graph, but we have almost all the edges of the $r$-partite Tur\'an graph). Thus, we can apply the Erd\H os-Simonovits supersaturation theorem \cite{ersi} to obtain that $G(v)$ contains at least $cn^{br}$ copies for some constant $c>0$ of the complete $r$-partite graph $K_{b,\dots,b}$ for $b=|V(F)|$.

Consider the following auxiliary bipartite graph. Part $A$ consists of the copies of $K_{b,\dots,b}$ in $\cup_{i=1}^r A_i\setminus B_i$, while the other part is $B$. A vertex $u\in A$ is adjacent to $v\in B$ if the corresponding complete $r$-partite graph is in the neighborhood of $v$. Then clearly $|A|\le n^{br}$. Each vertex of $A$ has at most $\sigma(F)-1$ neighbors in $B$, since otherwise we can find a complete $(r+1)$-partite graph in $G$ with parts of order $\sigma(F),|V(F)|,\dots, |V(F)|$, which obviously contains $F$. This implies that the number of edges in this auxiliary bipartite graph is at most $(\sigma(F)-1)n^{br}$ and at least $|B|cn^{br}$, completing the proof with $K=1/c$.
\end{proof}

Let us return to the proof of the theorem. Let $U_i$ denote the set of vertices $v$ in $A_i\setminus B_i$ such that there are at least $\beta n$ vertices in $V(G)\setminus A_i$ that are not adjacent to $v$. Clearly, $|U_i|\le \varepsilon n/\beta$.
Let $V_i=A_i\setminus (B_i\cup U_i)$. 

For each vertex $v\in U_i$ for each $i$, let us delete the edges from $v$ to vertices in $A_i$, and connect $v$ to every vertex of $V_j$ with $j\neq i$. Let $G'$ be the resulting graph. Observe that we deleted at most $\gamma n$ edges incident to $v$, thus at most $\gamma n^{|V(H)|-1}$ copies of $H$ containing $v$. On the other hand, we added at least $\beta n -r\varepsilon n/\beta -rK(\sigma(H)-1)\ge \beta n/2$ edges incident to $v$. 

We claim that these edges are in at least $\beta n^{|V(H)|-1}/2^{|V(H)|-1}$ copies of $H$. Indeed, let us fix an $r$-coloring of $H$ with color classes $W_1,\dots,W_r$. We count only the copies of $H$ where $W_j$ is embedded into $V_j$ for $j\neq i$ and $W_i$ is embedded into $V_i\cup \{v\}$. We apply the same greedy idea that we used in the proof of Lemma \ref{greed}. First we embed $W_i$: One vertex to $v$ and the other vertices into $V_i$ arbitrarily. Then we embed the other parts $W_j$ in an arbitrary order. Each time, when we want to embed $W_\ell$, the already embedded at most $|V(H)|$ vertices have at least $|V_\ell|-|V(H)|\beta n>n/2+|V(H)|$ common neighbors in $V_\ell$, as each vertex of $V_j$ has at most $\beta n$ non-neighbors in other parts. Therefore, embedding the vertices of $W_\ell$ one by one, we always have at least $n/2$ choices.

We also claim that $G'$ is $F$-free. Assume not and pick a copy $F_0$ of $F$ with the smallest number of vertices from $\cup_{i=1}^r U_i$. Clearly $F_0$ contains a vertex $v\in U_i$, as all the new edges are incident to such a vertex.  Let $Q$ be the set of vertices in $F_0$ that are adjacent to $v$ in $G'$. They are each from $\cup_{j\neq i}V_j$. Their common neighborhood in $V_i$ is of order at least $n-|V(F)|\beta n>|V(H)|$.
Therefore, at least one of them is not in $F_0$, thus we can replace $v$ with that vertex to obtain another copy of $F$ with less vertices from $\cup_{i=1}^r U_i$, a contradiction.

We obtained that $G'$ is $F$-free and contains more copies of $H$ than $G$ (a contradiction) unless $U_i$ is empty for every $i$. We also claim that there is no member of $\cD(F)$ inside $V_i$. Indeed, by Lemma \ref{greed} that would extend to a copy of $F$. Moreover, if there is a member of $\cD(F)$ that contains only one vertex $u$ from $B$, then we can restrict ourselves to the neighbors of $u$ and apply Lemma \ref{greed} to obtain a copy of $F$.

Let us count now the copies of $H$ in $G$. We have $\cN(H,T)$ copies inside $T$. There are at most $r\biex(n,F)$ edges inside $V_i$, thus there are at most $r\biex(n,F)n^{|V(H)|-2}$ copies of $H$ using some of those edges. It is left to count the copies of $H$ which contain a vertex from $B$. As $|B|\le rK(\sigma(F)-1)$, clearly there are at most $rK(\sigma(F)-1)n^{|V(H)|-1}$ such copies of $H$. If $\sigma(F)=1$, then this is 0. If $\sigma(F)>1$, then $\biex(n,F)\ge n-1$, since the star is in the decomposition family. Therefore, $rK(\sigma(F)-1)n^{|V(H)|-1}=O(\biex(n,F)n^{|V(H)|-2})$, completing the proof of \textbf{(i)}.

The lower bound in \textbf{(ii)} is given by taking $T$ and adding into one part $A_i$ of $T$ a $\cD(F)$-free graph with $\biex(|A_i|,F)$ edges. As $|A_i|=\Omega(n)$ by Lemma \ref{neww}, $\biex(|A_i|,F)=\Theta(\biex(n,F))$. Observe that in \textbf{(a)}, each new edge is in $\Theta(n^{|V(H')|-2})$ copies of $H'$. Clearly we can find $\Theta(n^{|V(H'')|})$ copies of every component $H''$ of $H$ in $T$, which completes the proof of \textbf{(a)}. 

To show \textbf{(b)}, observe that there are $\Theta(\biex(n,F))$ edges in $A_i$ that are contained in a member of $\cD(H')$. Each such copy is clearly extended a copy of $H'$ using edges of $T$. From here we can proceed as in the proof of \textbf{(a)} above.
\end{proof}

Let us continue with the proof of Theorem \ref{disconn}. Recall that it deals with $F$ consisting of $s>1$ components with chromatic number $r+1$ and a color-critical edge, and additional components with smaller chromatic number. The theorem states that if $H$ is weakly $F$-Tur\'an-stable and $n$ is large enough, then $\ex(n,H,F)=\cN(H,T)$ for some complete $(s+r-1)$-partite graph $T$ with $s-1$ parts of order 1.

\begin{proof}[Proof of Theorem \ref{disconn}] The lower bound is obvious. Let $G$ be an $n$-vertex $F$-free graph with $\ex(n,H,F)$ copies of $H$.
We will apply Theorem \ref{thm1} to obtain a partition to $A_1,\dots,A_r$ and a set $B$ with $|B|=O(1)$ such that each member of $\cD(F)$ inside parts $A_i$ contains a vertex from $B$. Assume first that there are $s$ independent edges $u_1v_1,\dots, u_sv_s$ inside the parts such that for each $i$, at least one of $u_i$ and $v_i$ are not in $B$. Observe that $u_i$ and $v_i$ have $\Omega(n)$ common neighbors in each part besides the one containing them.

Let $F_1,\dots,F_s$ denote the components of $F$ with chromatic number $r+1$. We apply the same greedy idea as in the proof of Lemma \ref{greed}. We go through the edges $u_iv_i$ one by one and extend them to $F_i$. Without loss of generality, $v_i\in A_1$. Let $B_2,\dots,B_r$ denote the parts of the graph we obtain by deleting a color-critical edge from $F_i$. We will embed the vertices in $B_j$ to $A_j$. Recall that $u_i$ and $v_i$ have $\Omega(n)$ common neighbors in $A_2$. We pick $|B_2|$ of them that avoid the vertices we already picked to be in the copy of $F$. Then the vertices we picked to be in the copy of $F_i$ have $\Omega(n)$ common neighbors in $A_3$, we pick $|B_3|$ of them that we have not picked to be in our copy of $F$, and so on. We always have to avoid $O(1)$ already picked vertices. This way we obtain an $F_i$ and ultimately $F_1,\dots,F_s$. Clearly we can pick the remaining components in a similar way to obtain $F$, a contradiction.

If $|B|\ge s$, then clearly we can find $s$ distinct vertices among their neighbors not in $B$, resulting in the contradiction. Observe that outside $B$, each vertex is in $o(n)$ edges inside parts. It is easy to see that the edges inside parts but outside $B$ form at most $s-1-|B|$ stars plus $O(1)$ further edges. Therefore, deleting all the edges inside parts that are not incident to $B$, we lose $o(n^{|V(H)|-1})$ copies of $H$.
If $|B|<s-1$, then we can add a vertex to $B$ creating $\Theta(n^{|V(H)|-1})$ copies of $H$, a contradiction. We obtained that $|B|=s-1$. Clearly, for any edge $u_sv_s$ inside parts but outside $B$ we could add independent edges $u_1v_1,\dots,u_{s-1}v_{s-1}$ with $u_i\in B$, to obtain the forbidden configuration. This implies that $G$ is a subgraph of a complete $(s+r-1)$-partite graph with $s$ parts of order 1, completing the proof.
\end{proof}

Let us continue with the proof Theorem \ref{treecompl}, that we restate here for convenience.

\begin{thm*}
Let $F$ be the complete $(r+1)$-partite graph $K_{1,a,\dots,a}$, $H$ be a weakly $F$-Tur\'an-stable graph and $n$ sufficiently large. 

\textbf{(i)} $\ex(n,H,F)=\cN(H,T)$ for some $T\in \cT_0^{(a-1)}(n,r)$.

\textbf{(ii)} If $H$ is a forest, then $\ex(n,H,F)=\cN(H,T)$ for every $T\in \cT_0^{(a-1)}(n,r)$ where the graph embedded into each part has girth at least $|V(H)|$.

\end{thm*}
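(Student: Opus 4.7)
The plan is to apply Theorem \ref{thm1}(i) to an extremal graph and exploit the $r$-partite structure it produces. Let $G$ be an $F$-free graph with $\cN(H,G)=\ex(n,H,F)$. Since $F=K_{1,a,\dots,a}$ has a color-critical vertex (the singleton class), the parameter $\sigma(F)$ equals $1$, forcing the exceptional set $B$ in the conclusion of Theorem \ref{thm1}(i) to be empty. We obtain a partition $V(G)=A_1\cup\cdots\cup A_r$ with $|A_i|=\Omega(n)$ such that every vertex has $o(n)$ neighbors in its own part and all but $o(n)$ neighbors in every other part, and $G[A_i]$ contains no member of $\cD(F)=\{K_{1,a},K_{a,a}\}$; the star condition alone gives maximum degree at most $a-1$ in each part.

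For \textbf{(i)}, I would construct a $T\in \cT_0^{(a-1)}(n,r)$ satisfying $\cN(H,T)\ge \cN(H,G)$. Using the same partition, install the complete $r$-partite graph between parts and, inside each $A_i$, install an almost $(a-1)$-regular graph $G^*_i$ with girth at least $|V(F)|$. The high girth prevents any potential copy of $F$ from placing two of its $r+1$ color classes inside a single $A_i$ via small cycles (such as triangles or $K_{2,2}$-like patterns connecting two $F$-classes), so pigeonhole across the $r$ parts precludes $F$ and $T$ is $F$-free. Since the almost $(a-1)$-regular choice maximizes the number of inside-part edges under the degree cap, and since each inside-part edge is contained in $\Theta(n^{|V(H)|-2})$ copies of $H$ through the complete between-part skeleton, a careful switching argument (possibly selecting a specific $G^*_i$ whose local structure maximizes the contribution of the most sensitive subgraph of $H$) yields $\cN(H,T)\ge \cN(H,G)$, hence equality by $F$-freeness of $T$.

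For \textbf{(ii)}, the crucial combinatorial input is that for any rooted tree $H_0$ embedded into an almost $(a-1)$-regular graph with girth at least $|V(H_0)|$, the number of rooted labelled copies starting at a fixed vertex depends only on the abstract tree structure of $H_0$ and on $a-1$: a greedy depth-first unfolding provides $a-1$ choices at the root and $a-2$ choices at each subsequent vertex (excluding the parent), and the girth bound precludes repetitions. Decomposing $\cN(H,T)$ according to how the vertices of the forest $H$ distribute across the parts of $T$, each summand factors as a product of such canonical tree counts for the inside-part pieces and part-size factors for the between-part edges, which is independent of the particular almost $(a-1)$-regular graph placed in each part. Thus every admissible $T$ yields the same value of $\cN(H,T)$, equal to $\ex(n,H,F)$ by \textbf{(i)}.

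The main obstacle will be step \textbf{(i)}: not every graph in $\cT_0^{(a-1)}(n,r)$ is $F$-free (a triangle in one part together with a $C_4=K_{2,2}$ in another already realizes $K_{1,3,3}$ when $a=3$), so the high-girth choice of $G^*_i$ is essential and must be coordinated with the $H$-count optimization. The paper's own remark flags the additional subtlety that an almost $(a-1)$-regular graph with strictly the maximum number of edges need not maximize the count of every subgraph of $H$; this is what forces the ``some $T$'' formulation in \textbf{(i)} rather than ``every $T$''.
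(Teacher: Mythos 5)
Your reduction via Theorem \ref{thm1} is the same as the paper's: $\sigma(F)=1$ forces $B=\emptyset$, and $K_{1,a}\in\cD(F)$ forces maximum degree at most $a-1$ inside each part, so the extremal graph is a complete $r$-partite graph with $K_{1,a}$-free graphs in the parts. Your observation that not every member of $\cT_0^{(a-1)}(n,r)$ is $F$-free (your $C_3$/$C_4$ example for $K_{1,3,3}$ is correct) and that high girth inside the parts restores $F$-freeness is a genuine point on which you are more careful than the paper, which asserts $F$-freeness of the whole family.

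However, part \textbf{(i)} of your argument has a real gap at its central step. You propose to build one canonical $T$ with high-girth almost $(a-1)$-regular part-graphs $G_i^*$ and to conclude $\cN(H,T)\ge\cN(H,G)$ by ``a careful switching argument,'' but that inequality is exactly what has to be proved, and your replacement strategy does not obviously deliver it. A copy of $H$ may meet a part $A_i$ in a path, a triangle, or a larger connected subgraph, and the \emph{total} contribution of each such intersection pattern is $\Theta(n^{|V(H)|-2})$ --- the same order as the contribution of a single inside-part edge. So the extremal graph's own part-graph $G_i$ (almost regular or not) could carry $\Theta(n^{|V(H)|-2})$ more copies of $H$ than your high-girth $G_i^*$ even if it has no more edges, and swapping it out could strictly decrease the count; conversely, maximizing the number of inside-part edges does not by itself maximize $\cN(H,\cdot)$. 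The paper avoids this by not replacing the part-graphs with canonical ones: it shows the extremal graph's part-graphs are \emph{already} almost $(a-1)$-regular, via a two-case count. If $|E(G_i)|\le (a-1)|A_i|/2-c'/c$, the per-edge gain $c\,n^{|V(H)|-2}$ from the missing edges outweighs the combined $c'n^{|V(H)|-2}$ contribution of \emph{all} non-single-edge intersection patterns; otherwise $G_i$ becomes almost $(a-1)$-regular after an $O(1)$-edge modification, which perturbs the count of every larger intersection pattern by only $O(n^{|V(H)|-3})$ while gaining $\Omega(n^{|V(H)|-2})$ from the added edges. Some version of this count is indispensable; you flag the difficulty in your last paragraph but do not resolve it. (Note also that for \textbf{(i)} the conclusion is only ``some $T$,'' so once the extremal graph is shown to lie in $\cT_0^{(a-1)}(n,r)$ nothing needs to be replaced; the girth condition is only needed to keep modified graphs $F$-free.)

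Your part \textbf{(ii)} is essentially the paper's argument: the paper cites Cambie--de Verclos--Kang for the fact that all almost $(a-1)$-regular graphs of girth at least $|V(H)|$ carry the same (and the maximum) number of copies of each forest, whereas you derive the same invariance directly from a falling-factorial count in a locally tree-like graph; both are acceptable at this level of detail.
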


\begin{proof}
To prove \textbf{(i)}, observe first that by Theorem \ref{thm1}, an $n$-vertex $F$-free graph $G$ with $\cN(H,G)=\ex(n,H,F)$ can be obtained the following way: we embed a $K_{1,a}$-free graph into each part of an $r$-partite graph $T$. We can assume that $T$ is a complete $r$-partite graph, as adding further edges between parts does not create $F$. 

We need to show that the graphs $G_i$ embedded into the parts $A_i$ are $(a-1)$-almost regular. Those graphs $G_i$ do not affect the number of copies of $H$ that contain only edges between parts.
Each other copy of $H$ intersects some part $A_i$ in a subgraph $H_i$ that contains least one edge. Let $p_i$ denote the number of isolated edges in $H_i$ and $q_i$ denote the number of components of $H_i$ of order more than 2. 

Observe that for a fixed $H_i$, there are $O(n^{|V(H)|-p_i-2q_i})$ copies of $H$ intersecting $A_i$ in a copy of $H_i$. Indeed, we pick one vertex from each component inside $A_i$, then there are $O(1)$ ways to pick the other vertices of that component. There are at most $n$ ways to pick each other vertex of $H$. 
This means that for any edge inside $A_i$, there are at least $cn^{|V(H)|-2}$ copies of $H$ containing that edge (and possibly other vertices) for some $c$, and for some $c'$, there are at most $c'n^{|V(H)|-2}$ copies of $H$ containing another subgraph inside $A_i$. 

Assume now that $G_i$ is not an $(a-1)$-almost regular graph. If $|E(G_i)|\le (a-1)|A_i|/2-c'/c$, then we are done, since we lose at least $cn^{|V(H)|-2}c'/c$ copies of $H$. Observe that if $|E(G_i)|> (a-1)|A_i|/2-c'/c$, then at least $|A_i|-2c'/c$ vertices have degree $a-1$. It is easy to see that we can turn $G_i$ to an $(a-1)$-almost regular graph $G_i'$ by adding and removing $O(1)$ edges (removing may be necessary, e.g., if $G_i$ contains two adjacent vertices of degree $a-2$ and $|A_i|-2$ vertices of degree $a-1$). 

We claim that 
$\cN(H_i,G_i)-\cN(H_i,G_i')=O(n^{|V(H_i)|-p_i-q_i-1})$. Let $U$ be the set of vertices with different neighborhood in $G_i$ and $G_i'$, then $|U|=O(1)$. The number of copies of $H_i$ avoiding $U$ is the same in $G_i$ and $G_i'$. The other copies in $G_i'$ can be counted by picking a vertex in $U$ and extending it to a component, $O(1)$ many ways, then picking one vertex in each component, $O(n)$ ways each, then adding adjacent vertices $O(1)$ ways.

This means that for each $H_i$ that is not an isolated edge plus some isolated components, there are at most $c''n^{|V(H)|-3}$ more copies in $G_i$ than in $G_i'$. Therefore, replacing $G_i$ with $G_i'$, we lose $O(n^{|V(H)|-3})$ copies of $H$ and we gain
$\Omega(n^{|V(H)|-2})$ copies, a contradiction.

To prove \textbf{(ii)}, by Theorem \ref{thm1}, we need to maximize the number of copies of $H$ in graphs $G$ which are obtained from a complete $r$-partite graph by placing $K_{1,a}$-free subgraphs into each part. Each copy of $H$ intersects each part in a forest. As observed by Cambie, de Verclos and Kang \cite{cvk}, the largest number of copies of each forest in $m$-vertex $K_{1,a}$-free graphs is in almost $a$-regular graphs with girth at least the order of the forest. Such graphs exist if $m$ is sufficiently large, completing the proof.
\end{proof}

Before proving the rest of the theorems, we need another lemma. Let $H$ be a graph with chromatic number less than $r$ and $uv$ be an edge of $H$ that cuts $H$ into at least two connected components. Let us fix an edge $u'v'$ of $T(n,r)$ and consider the number of copies of $H$ in $T(n,r)$ where $u'v'$ corresponds to $uv$. Note that this number depends on $u'$ and $v'$, let $f(n)$ denote the largest
number obtained this way.

\begin{lemma}\label{lemmanew}
Let $G$ be obtained from $T(n,r)$ by adding and removing $o(n)$ edges at every vertex. Then each edge of $G$ corresponds to $uv$ in $(1+o(1))$ copies of $H$ in $G$. If we add any non-edge to $G$, the resulting edge also corresponds to $uv$ in $(1+o(1))f(n)$ copies of $H$.
\end{lemma}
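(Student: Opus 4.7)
The plan is to exploit the bridge structure. Removing $uv$ from $H$ splits the component of $H$ containing $uv$ into subcomponents $H_1\ni u$ and $H_2\ni v$ (any other components of $H$ are embedded independently by Lemma \ref{greed} and contribute only a trivial multiplicative factor of $(1+o(1))n^{|V(H)|-|V(H_1)|-|V(H_2)|}$, uniform in $G$ and $T(n,r)$, so I focus on the case $H=H_1\cup H_2\cup\{uv\}$). Any copy of $H$ in a graph $G'$ with $uv\mapsto xy$ is determined by an embedding $\phi_1\colon H_1\to G'$ with $\phi_1(u)=x$ and an embedding $\phi_2\colon H_2\to G'$ with $\phi_2(v)=y$ whose images are disjoint. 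Writing $M_{G'}(z,H_j)$ for the number of embeddings of $H_j$ into $G'$ sending the distinguished endpoint to $z$, each $M_{G'}(z,H_j)=\Theta(n^{|V(H_j)|-1})$, while the number of pairs $(\phi_1,\phi_2)$ whose images overlap is $O(n^{|V(H)|-3})$ (fix an overlap vertex of $G'$ and its two pre-images, then count extensions). Hence, writing $N_{G'}(xy)$ for the number of copies of $H$ in $G'$ with $uv\mapsto xy$,
\[
N_{G'}(xy)=(1+o(1))\,M_{G'}(x,H_1)\,M_{G'}(y,H_2).
\]

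I would next show $M_G(z,H_j)=(1+o(1))M_{T(n,r)}(z,H_j)$ by a greedy/BFS count. Place the vertices of $H_j$ in a BFS order from the distinguished endpoint, picking each new image from the common $G$-neighbourhood of its already-placed $H_j$-neighbours inside the appropriate part $A_k$. In $T(n,r)$ this gives $|A_k|-O(1)$ choices; in $G$ each placed vertex (sitting in a part $\neq k$) is adjacent to all but $o(n)$ vertices of $A_k$, giving $|A_k|-o(n)=(1-o(1))|A_k|$ choices, and multiplying over the $|V(H_j)|-1$ steps yields the partition-respecting embeddings in the claimed ratio. Embeddings that use an intra-part edge of $G$ are negligible, since $G$ has $o(n^2)$ intra-part edges and each such edge pins two vertices of $H_j$, bounding them by $o(n^2)\cdot O(n^{|V(H_j)|-3})=o(n^{|V(H_j)|-1})$. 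The same reasoning applied to $G^+=G\cup\{xy\}$ for any non-edge $xy$ — where the added edge again pins two vertices of $H_j$ — gives $M_{G^+}(z,H_j)=(1+o(1))M_G(z,H_j)$.

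Finally, I would show $M_{T(n,r)}(z,H_j)=(1+o(1))\mu_j(n)$ for a function $\mu_j(n)$ independent of $z$: expand it as a sum over proper $r$-colourings $c$ of $H_j$ with the distinguished endpoint coloured by the part of $z$, and observe that a colour relabelling yields a bijection between the admissible colouring sets for different parts, while each factor $\prod_k(n_k)_{m_k}$ depends on the part sizes $n_k$ (which differ from $n/r$ by at most $1$) only up to $(1+O(1/n))$. Combining the three steps, both $N_G(xy)=(1+o(1))\mu_1(n)\mu_2(n)$ for every edge $xy$ of $G$ and $N_{G^+}(xy)=(1+o(1))\mu_1(n)\mu_2(n)$ for the inserted edge; applying the factorisation to $T(n,r)$ itself shows $f(n)=(1+o(1))\mu_1(n)\mu_2(n)$, so both statements follow. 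The main obstacle is controlling the several sources of $(1+o(1))$ error — perturbation $T(n,r)\to G$, overlap correction for $\phi_1,\phi_2$, part-size asymmetry, and (for the second statement) the added edge — but since each is multiplicative of the form $1+o(1)$ and only a bounded number are composed, they aggregate to a single $1+o(1)$ factor in the conclusion.
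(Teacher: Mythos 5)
Your proposal is correct and follows essentially the same route as the paper: remove the bridge $uv$, count the rooted pieces on each side by a greedy/BFS embedding that exploits the fact that every vertex of $G$ is adjacent to all but $o(n)$ vertices of each other part, and use the near-symmetry of $T(n,r)$ under permuting parts to see that the rooted counts do not depend (up to $1+o(1)$) on which part the root lies in. Your write-up is more detailed than the paper's sketch (explicit overlap and intra-part-edge error bounds), but there is no substantive difference in method.
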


\begin{proof}
For each edge and non-edge $u'v'$ of $G$, the rest of $G$ has edit distance $o(n^2)$ from the Tur\'an graph, thus has $(1+o(1))\cN(H',T(n,r))$ copies of $H$. The only difference is whether such a copy is extended to $H$ with $u'$ and $v'$. As the parts $A_i$ have roughly the same size, the only difference is whether $u'$ and $v'$ belong to the same part or not. Let $H'$ denote the graph we obtain by deleting $u$ and $v$ from $H$, and let $H''$ be a component of $H'$ such that some of its vertices are connected to, say, $u$. We can embed $H''$ into $G$ $(1+o(1))\cN(H'',T(n,r))$ ways, even after we already embedded some of the other components of $H'$. More importantly, such an embedding is good for us if the neighbors of $u$ in $H'$ are the neighbors of $u'$ in $G$. This happens when they belong to parts of $T(n,r)$ distinct from the part of $u$, with a lower order term of exceptions. Thus we can embed the components one by one, obtaining the same asymptotic each time.
\end{proof}

Now we are ready to prove Theorem \ref{thm3} that we restate here for convenience.

 \begin{thm*}
 Let $r+1=\chi(F)>\chi(H)$, assume that $H$ is $F$-Tur\'an-stable and $F$-Tur\'an-good and $n$ is sufficiently large. Assume that $H$ is a forest
and $\biex(n,F)=O(1)$. Then $\ex(n,H,F)=\cN(H,G)$ for some $G$ 
  with $\ex(n,F)=|E(G)|$.
 \end{thm*}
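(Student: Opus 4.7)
Since $H$ is $F$-Tur\'an-good, $\ex(n,H,F) = \cN(H,T(n,r))$. Hence it suffices to exhibit an $F$-free graph $G$ with $G \supseteq T(n,r)$ and $|E(G)| = \ex(n,F)$: such a $G$ satisfies $\cN(H,G) \ge \cN(H,T(n,r)) = \ex(n,H,F)$ by monotonicity, while $F$-freeness forces the reverse bound, giving equality.

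My plan is to start from any extremal $F$-free graph $G^*$ (so $|E(G^*)| = \ex(n,F)$) and modify it to contain $T(n,r)$. By the Erd\H{o}s--Simonovits stability theorem together with the hypothesis $\biex(n,F) = O(1)$, the graph $G^*$ differs from $T(n,r)$ in only $O(1)$ edges; write $E^+ = E(G^*) \setminus E(T(n,r))$ (extra within-part edges) and $E^- = E(T(n,r)) \setminus E(G^*)$ (missing cross-part edges), both of size $O(1)$. If $E^- = \emptyset$, we are done. Otherwise fix $xy \in E^-$: since $|E(G^*+xy)| > \ex(n,F)$, the graph $G^*+xy$ contains a copy of $F$; each such copy uses $xy$ (as $G^*$ is $F$-free) and some edge of $E^+$ (since $T(n,r)$ already contains $xy$ and is $F$-free). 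I would then choose $e'' \in E^+$ common to every such $F$-copy and set $G^* := G^* - e'' + xy$, preserving the edge count, maintaining $F$-freeness, and reducing $|E^-|$ by one. Iterating finitely many times produces $G^* \supseteq T(n,r)$.

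The assumptions that $H$ is a forest and $F$-Tur\'an-stable enter as follows. The forest hypothesis makes Lemma \ref{lemmanew} available (a forest always has a disconnecting edge $uv$), which, combined with Theorem \ref{thm1}(i) applied to an extremal graph for $\ex(n,H,F)$, gives an alternative route: starting from such an extremal graph, one performs analogous swaps and then enlarges by further within-part edges up to $\ex(n,F)$; Lemma \ref{lemmanew} shows that each swap affects the $H$-count only at lower order, and the final sandwiching between $\cN(H,T(n,r))$ and $\ex(n,H,F)$ forces exact equality. $F$-Tur\'an-stability provides the structural control invoked through Theorem \ref{thm1}(i).

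The main obstacle is the swap step: showing that some $e'' \in E^+$ lies on every $F$-copy of $G^*+xy$. Since $|E^+| = O(1)$, each such copy decomposes as $xy$ together with an $O(1)$-sized configuration of edges from $E^+$ and $T(n,r)$, corresponding to a member of the decomposition family $\cD(F)$; the rigidity forced by $\biex(n,F) = O(1)$ (which requires $\cD(F)$ to contain both a star and a matching) restricts these embeddings sharply, and a structural pigeonhole pins down a common $e''$. In many natural cases, $F$ has a color-critical vertex and Simonovits's exact structure theorem directly provides an extremal $F$-free graph of the form $T(n,r)$ plus a $\cD(F)$-extremal subgraph inside a single part, so the swap is unnecessary.
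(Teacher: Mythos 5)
Your opening reduction is valid as far as it goes, but it commits you to proving something strictly stronger than the theorem: the existence of an $F$-free graph $G$ with $G\supseteq T(n,r)$ \emph{and} $|E(G)|=\ex(n,F)$ (a ``nice'' graph in the paper's terminology). The theorem only asserts $|E(G)|=\ex(n,F)$; it does not assert that $G$ contains the Tur\'an graph, and Theorem \ref{bipa} treats exactly the nice-graph property and only under the explicit extra hypothesis that a nice graph exists. The step where you would establish this existence is the swap, and that is where the genuine gap lies: after adding a missing cross-pair $xy$ to the edge-extremal $G^*$, every copy of $F$ in $G^*+xy$ indeed uses $xy$ and at least one edge of $E^+$, but nothing forces a \emph{single} $e''\in E^+$ to lie on all of them. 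Two copies of $F$ through $xy$ may use disjoint subsets of $E^+$, in which case deleting any one edge of $E^+$ leaves a copy of $F$, and deleting more than one breaks the edge count. Your appeal to ``a structural pigeonhole'' is not an argument, and the fact that your main line never uses the hypotheses that $H$ is a forest or that $H$ is $F$-Tur\'an-stable is a further warning sign: if the swap worked, those hypotheses would be superfluous, yet the paper needs both.

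The paper's proof goes the other way around. It assumes for contradiction that a graph $G_0$ attaining $\ex(n,H,F)$ has fewer than $\ex(n,F)$ edges, uses Theorem \ref{thm1}(i) (via $F$-Tur\'an-stability) to write $G_0$ as a complete $r$-partite graph plus $a$ inside edges minus $b$ cross edges with $a,b=O(1)$, and writes an edge-extremal $G_1$ as $T(n,r)$ plus $a'$ minus $b'$ edges, so that $a-b<a'-b'$. Because $H$ is a forest, every edge of $H$ is a cut edge, so Lemma \ref{lemmanew} applies: each single edge added or removed changes the $H$-count by $(1+o(1))f(n)$, and $F$-Tur\'an-goodness lets one pass from the intermediate complete $r$-partite graph to $T(n,r)$ without losing copies. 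Summing along the chain $G_0\to T\to T(n,r)\to G_1$ gives $\cN(H,G_1)-\cN(H,G_0)=(1+o(1))(a'-b'+b-a)f(n)>0$, contradicting the extremality of $G_0$. If you want to salvage your approach, you would either need to prove the existence of a nice graph under these hypotheses (which the paper does not do and which is likely false in general), or switch to a counting comparison of the above kind.
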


\begin{proof}
Let $G_0$ be an $n$-vertex $F$-free graph with less than $\ex(n,F)$ edges such that $\cN(H,G_0)=\ex(n,H,F)$, thus we can apply Theorem \ref{thm1}. Therefore, $G_0$ can be obtained from a complete $r$-partite graph $T$ with parts $A_1,\dots,A_r$ by adding $a$ edges inside parts and removing $b$ edges between parts. Moreover, each vertex is connected to $o(n)$ vertices in its part and all but $o(n)$ vertices in the other parts. We also have $a,b=O(1)$. Observe that each part of $T$ has order $(1+o(1))n/r$, otherwise $T$ contains $\Omega(n^{|V(H)|})$ less copies of $H$, and the $O(1)$ extra edges create $O(n^{|V(H)|-2})$ copies of $H$.
Let $G_1$ be an $n$-vertex $F$-free graph with $\ex(n,F)$ edges.

As $K_2$ is $F$-Tur\'an-stable, we have that $G_1$ can be obtained from $T(n,r)$
by adding $a'$ and deleting $b'$ edges. There are no graphs from $\cD(F)$ inside the parts of $T'$, thus $a'=O(1)$. 
Observe that $a-b< a'-b'$ since $G_0$ has less than $\ex(n,F)$ edges. We will apply Lemma \ref{lemmanew} to both $G_0$ and $G_1$.

Let us return to $G_0$. By deleting the $a$ edges inside parts, we delete $(1+o(1))af(n)$ copies of $H$; the number of copies containing more than one such edge is negligible. By adding the $b$ edges to obtain a complete $r$-partite graph, we create $(1+o(1))bf(n)$ copies of $H$. Then we turn the resulting graph to the Tur\'an graph, this does not decrease the number of copies of $H$ because $H$ is $F$-Tur\'an-good. Finally, we turn this graph to $G_1$ by adding $a'$ and removing $b'$ edges. This way altogether the number of copies of $H$ increases by $(1+o(1))(a'-b'+b-a)f(n)$. As we have $0<(a'-b'+b-a)=O(1)$, we obtain that the number of copies of $H$ increases, a contradiction completing the proof.
\end{proof}

We continue with the proof of Theorem \ref{bipa} that we restate below for convenience. As the proof is similar to the proof of Theorem \ref{thm3}, we will present it briefly. One can look at the above proof to find how the arguments can be completed. Recall that $G$ is nice for $F$ if $G$ contains $T(n,r)$ and has $\ex(n,F)$ edges.

\begin{thm*} Let $\chi(F)=3$, $\chi(H)=2$, assume that $H$ is $F$-Tur\'an-stable and $F$-Tur\'an-good, $\biex(n,F)=O(1)$ and $n$ is sufficiently large. If there is a nice graph for $F$, then there is a nice graph $G_0$ for $F$ with $\ex(n,H,F)=\cN(H,G_0)$.
\end{thm*}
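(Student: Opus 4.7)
The plan is to mimic the proof of Theorem~\ref{thm3} but to exploit the extra hypothesis that $H$ is $F$-Tur\'an-good, which short-circuits the delicate copy-counting step. Since $\chi(F)=3$ and $H$ is $F$-Tur\'an-good, one already has the identity $\ex(n,H,F)=\cN(H,T(n,2))$, so it suffices to exhibit a nice graph $G_0$ with $\cN(H,G_0)\ge\cN(H,T(n,2))$; the reverse inequality $\cN(H,G_0)\le\ex(n,H,F)$ is automatic from the $F$-freeness of $G_0$.

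The main step will be to let $G_0$ be any nice graph for $F$, which is guaranteed by the hypothesis. By the definition of nice, $G_0$ is an $F$-free graph containing $T(n,2)$ as a spanning subgraph, and since $\chi(H)=2$ implies $H\subseteq T(n,2)$ for $n$ large enough, every copy of $H$ in $T(n,2)$ is also a copy of $H$ in $G_0$. Hence $\cN(H,G_0)\ge\cN(H,T(n,2))=\ex(n,H,F)$, which combined with the $F$-free upper bound yields equality and finishes the argument.

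To match the flavour of Theorem~\ref{thm3}'s proof, I would also carry out the complementary direction: start from an arbitrary $F$-free extremal graph $G'$ for $\cN(H,\cdot)$, apply Theorem~\ref{thm1} together with $\biex(n,F)=O(1)$ to write $G'$ as a complete bipartite graph $T$ with parts of size $(1+o(1))n/2$ modified by $O(1)$ edges, invoke $K_2$-stability to describe the hypothesised nice graph relative to $T(n,2)$ in the same way, and then compare copy counts via Lemma~\ref{lemmanew}. The step I expect to be the main obstacle along this alternative route is that Lemma~\ref{lemmanew} requires a bridge edge of $H$, which is not guaranteed for bipartite $H$; my plan avoids this difficulty entirely by relying on plain subgraph monotonicity, which demands no structural assumption on $H$ beyond the already available inclusion $H\subseteq T(n,2)$.
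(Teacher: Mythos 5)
Your argument is correct, and it takes a genuinely different and far more elementary route than the paper. The paper starts from an arbitrary extremal graph $G$ for $\cN(H,\cdot)$, applies Theorem \ref{thm1} to exhibit it as a complete bipartite graph $T$ modified by $O(1)$ edges, and then compares, edge by edge, how many copies of $H$ pass through a fixed edge $uv$ of $H$ when that edge sits between the parts versus inside a part (via Lemma \ref{lemmanew} when $uv$ is a cut edge of $H$, and a parity-of-even-cycles argument otherwise), concluding that a nice graph does at least as well. You instead note that the hypothesis that $H$ is $F$-Tur\'an-good, in the strong sense defined in the paper, pins down $\ex(n,H,F)=\cN(H,T(n,2))$, while any nice graph $G_0$ is $F$-free and contains $T(n,2)$ as a subgraph, so $\cN(H,G_0)$ is sandwiched between $\cN(H,T(n,2))=\ex(n,H,F)$ and $\ex(n,H,F)$. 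This is airtight under the literal hypotheses and even yields the stronger conclusion that \emph{every} nice graph is extremal. The trade-off is worth flagging: your proof uses neither the $F$-Tur\'an-stability of $H$ nor $\biex(n,F)=O(1)$, and the fact that these hypotheses become redundant strongly suggests the theorem is really aimed at the \emph{weakly} Tur\'an-good/Tur\'an-stable setting, where the optimal complete bipartite graph need not be balanced, $T(n,2)$ need not attain $\ex(n,H,F)$, and your monotonicity shortcut collapses; the paper's exchange machinery is what one would need there, and it also yields structural information about arbitrary extremal graphs. As the statement is actually written, though, your proof suffices.
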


\begin{proof}
Let $G$ be an $n$-vertex $F$-free graph $G$ with $\ex(n,H,F)=\cN(H,G)$ and assume that $G$ is not nice. Let us apply Theorem \ref{thm1} to obtain $T$. Let us fix an edge $uv$ of $H$. If $uv$ cuts $H$ into two or more parts, we will apply Lemma \ref{lemmanew} to show that each edge of $G$ corresponds to $uv$ in $(1+o(1))f(n)$ copies of $H$. 
Otherwise $uv$ is contained in a cycle, which must be of even length. Observe that if an even cycle contains an edge of $G$ not in $T$, it must contain another such edge. This implies that an edge of $G$ not in $T$ corresponds to $uv$ in $O(n^{|V(H)|-3})$ copies of $H$. On the other hand, an edge of $T$ corresponds to $uv$ in $\Theta(n^{|V(H)|-2})$ copies of $H$.

We obtained that every edge of $T$ corresponds to $uv$ in more or asymptotically the same number of copies of $H$ as the edges of $G$ outside $T$, which implies the statement. 
\end{proof}

Let us continue with the proof of Theorem \ref{fan}, that we restate here for convenience.

\begin{thm*}

\textbf{(i)} Let $H$ be weakly $Q_r$-Tur\'an-stable. Then $\ex(n,H,Q_r)=\cN(H,G)$ for some $G\in \cG_m$, $m\le r$. In particular, if  $q:=(r-1)/2(r-k+1)$ is not an integer, then $\ex(n,K_k,Q_r)=\cN(K_k,G_{\lceil q\rceil})$. If $q$ is an integer, then either $\ex(n,K_k,Q_r)=\cN(K_k,G_{q})$ or $\ex(n,K_k,Q_r)=\cN(K_k,G_{q+1})$.

\textbf{(ii)} Let $H$ be $B_{r,1}$-Tur\'an-stable. Then $\ex(n,H,B_{r,1})=\cN(H,G')$, for some $G'$ that is obtained from a complete $r$-partite graph by adding an edge into one of the parts.
\end{thm*}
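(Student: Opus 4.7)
My plan is to apply Theorem~\ref{thm1} to an $n$-vertex $F$-free graph $G$ realising $\ex(n,H,F)$, for $F\in\{Q_r,B_{r,1}\}$. In both cases $F$ admits a proper $\chi(F)$-colouring with a colour class of size~$1$ (the singleton part of $K_{1,2,a_3,\dots,a_{r+1}}$ for $Q_r$, and the shared vertex for $B_{r,1}$), so $\sigma(F)=1$ and the exceptional set $B$ of Theorem~\ref{thm1} is empty. Moreover $\cD(F)$ contains both a path on three vertices and a matching of two edges, whence $\biex(n,F)=1$, so each part $A_i$ of the partition from Theorem~\ref{thm1} carries at most one internal edge $u_iv_i$; let $m$ count the parts that do. Combined with the property that every vertex of $G$ misses only $o(n)$ cross-part neighbours and has $o(n)$ same-part neighbours, the problem reduces to determining exactly which cross edges survive in the extremal $G$.

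For part~(ii), I plan to show $m\le 1$. Suppose instead that $u_iv_i\in E(G[A_i])$ and $u_jv_j\in E(G[A_j])$ with $i\ne j$. Since the partition has at least $\chi(B_{r,1})-1\ge 2$ parts (and at least three when $r\ge 4$), I pick a vertex $w$ in a third part $A_k$, form one $K_r$ containing $w$, the pair $\{u_i,v_i\}$, and one representative in each remaining part, and a second $K_r$ containing $w$, the pair $\{u_j,v_j\}$, and \emph{distinct} representatives in the remaining parts chosen outside $\{u_i,v_i,u_j,v_j\}$. Lemma~\ref{greed} applied to common neighbourhoods of size $\Omega(n)$ supplies all the representatives, and the two $K_r$'s meet only in $w$, producing a copy of $B_{r,1}$ in $G$; the low-$r$ case is handled by a direct analogous construction inside the two available parts. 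Hence $m\le 1$. Since adding any missing cross edge cannot create $B_{r,1}$ (which would require two internal edges in different parts) and can only raise $\cN(H,G)$, extremality forces all cross edges to be present, giving the desired form of $G'$.

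For part~(i) the same reduction leaves us with $m\le r$ internal edges $u_iv_i$. The structural claim is that whenever $u_iv_i$ and $u_jv_j$ are both present with $i\ne j$, both cross edges $u_iu_j$ and $v_iv_j$ are absent in $G$. Assuming for contradiction $u_iu_j\in E(G)$, I would embed $Q_r$ into $G$ by setting $A_1^Q=\{u_i\}$, $A_r^Q=\{v_i,u_j\}$, $A_{r+1}^Q=\{v_j,w\}$ for some $w\in A_j\setminus\{u_j,v_j\}$, and completing the remaining $Q_r$-parts $A_2^Q,\dots,A_{r-1}^Q$ by Lemma~\ref{greed} into the $r-2$ parts of $G$ other than $A_i,A_j$, one $Q_r$-part per $G$-part. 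The edges $u_iv_i$ (added) and $u_iu_j$ (assumed) give the two $A_1^Q$--$A_r^Q$ edges, while the added edge $u_jv_j$ together with the cross edge $v_iw$ form the two vertex-disjoint independent edges required between $A_r^Q$ and $A_{r+1}^Q$; all complete bipartite conditions among the other parts come from the cross edges present in $G$. A symmetric argument excludes $v_iv_j$, and no other cross edge can be missing without reducing $\cN(H,G)$; thus $G\in\cG_m$.

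For the $K_k$ specialisation I would expand $\cN(K_k,G_m)$ to leading order: each added edge contributes $\binom{r-1}{k-2}(n/r)^{k-2}$ extra $K_k$'s (pick $k-2$ of the other parts and a vertex in each), each of the $2\binom{m}{2}$ deleted cross edges destroys $\binom{r-2}{k-2}(n/r)^{k-2}$ cliques, and higher-order contributions from pairs of added edges used simultaneously are of smaller order. The resulting leading coefficient of $(n/r)^{k-2}$ is $m\binom{r-1}{k-2}-m(m-1)\binom{r-2}{k-2}$, which, after using $\binom{r-1}{k-2}/\binom{r-2}{k-2}=(r-1)/(r-k+1)$, is a concave quadratic in $m$ maximised at the integer $m=\lceil q\rceil$ with $q=(r-1)/(2(r-k+1))$; at an integral $q$ the values $m=q$ and $m=q+1$ tie. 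The main obstacle is the $Q_r$-embedding in part~(i): one must extract two vertex-disjoint independent edges between the last two $Q_r$-parts from the combination of one added edge and one cross edge, while choosing the auxiliary vertex $w$ and running the greedy completion via Lemma~\ref{greed} so as to avoid collisions with the fixed vertices $u_i,v_i,u_j,v_j,w$; the $\Omega(n)$-sized common neighbourhoods at each greedy step make this possible.
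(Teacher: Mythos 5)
Part (ii) of your proposal and the closing $K_k$ optimisation of part (i) essentially coincide with the paper's proof (two internal edges, a common neighbour in a third part, two greedily completed cliques meeting only there; and the leading coefficient $m\binom{r-1}{k-2}-m(m-1)\binom{r-2}{k-2}$ maximised at $m=\lceil q\rceil$). The structural step of part (i), however, contains a genuine error. You claim that the presence of the single cross edge $u_iu_j$ already yields a copy of $Q_r$, but your embedding does not prove this: the singleton class $A_1^Q$ of $Q_r$ is complete to \emph{every} other class, including $A_{r+1}^Q=\{v_j,w\}$, so the embedding silently requires $u_iv_j\in E(G)$ in addition to $u_iu_j$, i.e.\ it requires $u_i$ to be adjacent to \emph{both} endpoints of the internal edge of the other part. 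Worse, the claim itself is false: since the labels $u_j,v_j$ are interchangeable, it would force all four cross edges between $\{u_i,v_i\}$ and $\{u_j,v_j\}$ to be absent, and applied to $G_m$ (which retains $u_iv_j$ and $v_iu_j$) it would exhibit a $Q_r$ in $G_m$, contradicting the lower bound the theorem needs. The correct statement — which your embedding does establish once the hypothesis is repaired — is that no vertex of one pair can be adjacent to both vertices of the other pair, equivalently at most two of the four cross edges survive and they form a matching; this is the paper's ``three of the four cross edges give a contradiction'' step. A smaller issue in the same embedding: you take $|A_r^Q|=|A_{r+1}^Q|=2$, so for $a_r,a_{r+1}>2$ the remaining vertices of these classes (isolated within $A_r\cup A_{r+1}$) still have to be placed as common neighbours of $u_i$ and of the greedily chosen classes.

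A second gap is that you never verify the lower bound, i.e.\ that the members of $\cG_m$ are actually $Q_r$-free. For the first sentence of (i) this costs nothing, since the extremal graph itself is shown to lie in $\cG_m$; but the $K_k$ statement compares $\cN(K_k,G_m)$ over all $m$ and asserts that the maximum at $m=\lceil q\rceil$ is attained, which requires knowing that $G_{\lceil q\rceil}$ is admissible. The paper spends a separate case analysis on this (reducing to $a_2=\dots=a_{r+1}=2$ and tracking where the singleton class of $Q_r$ and the two matched pairs can be embedded), and some such argument must be supplied.
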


\begin{proof} We start with the upper bound in \textbf{(i)}.
We apply Theorem \ref{thm1}. We obtain that an extremal graph $G$ is obtained by adding at most one edge into each part $B_i$ of an $r$-partite graph $T$. Moreover, every vertex of $T$ is connected to all but $o(n)$ vertices in the other parts. Assume that $u_1v_1$ and $u_2v_2$ are added to parts $B_1$ and $B_2$ of $T$ and assume indirectly that three more edges between these vertices, without loss of generality $u_1u_2,u_1v_2,v_1v_2$ are in $G$. We show that in this case we can embed $F$ into $G$, a contradiction. We embed an edge between $A_r$ and $A_{r+1}$ to $u_1v_1$. For the other edge $xy$ between $A_r$ and $A_{r+1}$, we embed $x$ to a common neighbor of $u_2$ and $v_2$ in $B_1$ and embed $y$ to $u_2$. We embed the single vertex in the first part of $Q_r$ to $v_2$, and embed all the other vertices in $A_r$ and $A_{r+1}$ to neighbors of $v_2$ in $B_1$. Finally, we embed the remaining parts of $Q_r$ by Lemma \ref{greed}.

To show the lower bound in \textbf{(i)}, it is enough to deal with the case $a_2=\dots =a_{r+1}=2$. Assume that $Q_r$ is embedded into $G_m$. Let $v$ denote the vertex in the part of order 1 in $Q_r$. If $v$ is embedded in a part $B_i$ of $T$ together with any other vertex, then they are the endpoints of the extra edge of $G$ inside $B_i$, and the other extra edges of $G$ are not used, since one of the endpoints is not adjacent to the image of $v$. But then $Q_r$ is embedded into $T$ plus one edge, which is impossible. Thus we can assume that $B_i$ contains only $v$.

Observe that if $x\in A_i$ with $i<r$ and $x$ is embedded into $B_i$ of $T$, then at most one other vertex of $Q_r$ is embedded into $B_i$. Indeed, $x$ has only one non-neighbor in $Q_r$, and the same holds for that vertex, thus every set of three vertices in $Q_r$ containing $x$ induces at least two edges. The vertices in parts of order two are embedded into either at least $r-1$ or exactly $r-2$ parts of $T$. 

In the first case, those $r-1$ parts of $T$ contain at most $2r-2$ embedded vertices and the last part contains one vertex, a contradiction. In the second case, those $r-2$ parts contain the images of the parts of order 2, thus $A_r\cup A_{r+1}$ is embedded into one part, a contradiction.

Consider now $\ex(n,K_k,Q_r)$. If $T$ has a part of order not $(1+o(1))n/r$, then clearly $T$ has $\Theta(n^k)$ less copies of $K_k$ then the Tur\'an graph, and the extra edges create $O(n^{k-2})$ copies of $K_k$. Therefore, $G$ is close to $T(n,r)$, in particular, every edge of $G$ inside parts is in $(1+o(1))(n/r)^{k-2}\binom{r-1}{k-2}$ copies of $K_k$. Every non-edge between parts would create $(1+o(1))(n/r)^{k-2}\binom{r-2}{k-2}$ copies of $K_k$ is added to $G$. 

By the first part of the statement in \textbf{(i)}, we know that there are $m$ edges added inside parts and $2\binom{m}{2}$ edges removed between parts. Therefore, the number of copies of $K_k$ is $\cN(K_k,T)+(1+o(1))m(n/r)^{k-2}\binom{r-1}{k-2}-(1+o(1))m(m-1)(n/r)^{k-2}\binom{r-2}{k-2}$.
If $m<q$, then increasing $m$ increases this number, if $m>q$ then increasing $m$ decreases this number. If $m=q$, then it can go either way because of the $o(1)$ term. This shows that $G\in \cG_m$. If there are two parts of $T$ such that $|B_i|>|B_j|+1$, then we move a vertex $u$ from $B_i$ to $B_j$. We pick $u$ that is not incident to an edge of $G$ inside $B_i$. This creates $\Theta(n^{k-2})$ more copies in the complete $r$-partite graph $T$ and destroys $O(n^{k-3})$ copies of $K_k$ that contains an edge of $G$ not in $T$ (since those copies each contain $u$ and one of $m$ edges). Finally, it is easy to see that placing extra edges inside the smaller parts creates more copies of $K_k$.

\smallskip
The lower bound in \textbf{(ii)} is obvious. The upper bound in the case $r=2$ follows from Theorem \ref{bipa}, thus we assume that $r>2$. Assume that there are at least two edges $u_1v_1$ and $u_2v_2$ inside parts. We pick a vertex $v$ that is a common neighbor of these vertices in another part. Then we embed the intersection of the two cliques of $B_{r,1}$ to $v$. We embed two vertices of one of the cliques to $u_1$ and $v_1$ and two vertices of the other clique to $u_2$ and $v_2$. We embed the remaining vertices one by one to the other parts, each time picking a vertex that is in the common neighborhood of the vertices already picked for that clique.
\end{proof}

We finish with the proof of Theorem \ref{stabil} that we restate here for convenience.

\begin{thm*}
\textbf{(i)} Let $\chi(F)=r+1$ and $H$ be a weakly $F$-Tur\'an-stable graph. Assume that $H$ has a unique $r$-coloring and $H'$ is an $r$-chromatic graph obtained by adding edges but no vertices to $H$. Then $H'$ is  weakly $F$-Tur\'an-stable. 

\textbf{(ii)} Let $\chi(F)=3$ and $H$ be a weakly $F$-Tur\'an-stable graph. Let us assume that $H$ contains the $b(F)$-blow-up of $P_{\og(F)-1}$, where $u_1,\dots,u_{b(F)}$ replace the first vertex and $v_1,\dots,v_{b(F)}$ replace the last vertex of the path. Let $H'$ be the graph obtained by adding vertices $w_1,\dots,w_s,x_1,\dots x_t$ and edges $u_iw_j$, $v_ix_\ell$ for $i\le b(F)$, $j\le s$ and $\ell\le t$. Assume that $\binom{t-s}{2}\le s\le t$. Then $H'$ is $F$-Tur\'an-stable.
\end{thm*}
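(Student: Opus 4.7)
For part (i), I plan a direct transfer via labeled embedding counts. Writing $L$ for the labeled injective embedding count, the facts $V(H)=V(H')$ and $H\subseteq H'$ give $L(H',G)\le L(H,G)$ for every $G$. Unique $r$-colorability of $H$ forces the same on $H'$, and any labeled embedding of $H$ into a complete $r$-partite graph $T$ respects the coloring (up to permuting parts of $T$); since all cross-class edges are present in $T$, the extra edges of $H'$ are automatic and $L(H,T)=L(H',T)$. Using weak stability of $H$ and $\ex(n,H',F)\ge\cN(H',T^\ast)$ for an optimal complete $r$-partite $T^\ast$, it follows that $\cN(H',G)\ge\ex(n,H',F)-o(n^{|V(H')|})$ forces $\cN(H,G)\ge\ex(n,H,F)-o(n^{|V(H)|})$, and weak $F$-Tur\'an-stability of $H$ concludes.

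For part (ii), every embedding of $H'$ into an $F$-free $G$ decomposes uniquely as an embedding $\phi$ of $H$ together with a choice of $s$ vertices in $N(\phi):=\bigcap_iN_G(\phi(u_i))$ for the $w_j$'s and $t$ vertices in $M(\phi):=\bigcap_iN_G(\phi(v_i))$ for the $x_\ell$'s, so
\[
L(H',G)=(1+o(1))\sum_{\phi\colon H\hookrightarrow G}N(\phi)^{s}\,M(\phi)^{t}.
\]
The hypothesis that $H$ contains the $b(F)$-blow-up of $P_{\og(F)-1}$, combined with $F$-freeness, forces $|N(\phi)\cap M(\phi)|<b(F)$: otherwise $b(F)$ common neighbors of $\{u_i\}\cup\{v_j\}$ complete the blown-up path to a $b(F)$-blow-up of $C_{\og(F)}$, which contains $F$. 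In particular $N(\phi)+M(\phi)\le n+b(F)$. I would then adapt the partitioning argument of Theorem~\ref{thm1}(i) to the case $r=2$ to extract a bipartition $V(G)=A\sqcup B$, with $|A|=\alpha n$ and $|B|=(1-\alpha)n$, such that all but $o(n^2)$ edges lie between the parts and almost every vertex is adjacent to all but $o(n)$ vertices on the opposite side.

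With the bipartite structure in hand, the dominant bipartite-respecting embeddings give
\[
L(H',G)=(1+o(1))\bigl[\alpha^{|A_{H'}|}(1-\alpha)^{|B_{H'}|}+\alpha^{|B_{H'}|}(1-\alpha)^{|A_{H'}|}\bigr]n^{|V(H')|},
\]
where $(|A_{H'}|,|B_{H'}|)$ is the bipartition of $H'$ forced by the blow-up (with $u$'s and $v$'s in opposite classes since $\og(F)$ is odd, and with $w_j\in B_{H'}$, $x_\ell\in A_{H'}$). A Taylor expansion at $\alpha=1/2$ makes this a local maximum precisely when $(|A_{H'}|-|B_{H'}|)^2\le|A_{H'}|+|B_{H'}|$; the balanced blow-up makes the leading contribution to $|A_{H'}|-|B_{H'}|$ equal to $t-s$, so the hypothesis $\binom{t-s}{2}\le s\le t$ delivers $(t-s)^2\le s+t\le|A_{H'}|+|B_{H'}|$, the required inequality. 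A boundary check rules out $\alpha$ near $0$ or $1$, yielding $\alpha=1/2+o(1)$ and hence $G$ close to $T(n,2)$.

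The main obstacle is the first stage of part (ii): passing from ``many copies of $H'$'' to ``close to complete bipartite''. The naive bound $L(H',G)\le L(H,G)\cdot\max_\phi N(\phi)^sM(\phi)^t$ is insufficient whenever $s\ne t$, since the pointwise maximum $s^st^tn^{s+t}/(s+t)^{s+t}$ exceeds the balanced value $(n/2)^{s+t}$ and so admits non-bipartite structures. The remedy is to work globally with the sum over $\phi$, exploiting $|N(\phi)\cap M(\phi)|<b(F)$ to control contributions from atypical embeddings, in the same spirit as the partitioning step of Theorem~\ref{thm1}(i).
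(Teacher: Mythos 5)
Your part (i) is essentially the paper's own argument: the labelled counts of $H$ and $H'$ coincide on complete $r$-partite graphs because the unique $r$-colouring forces the extra edges of $H'$ to be cross edges, while in general $L(H',G)\le L(H,G)$; this transfers near-extremality for $H'$ into near-extremality for $H$, and the weak stability of $H$ finishes. That part is fine.

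Part (ii) has a genuine gap, and you have located it yourself: you never actually pass from ``many copies of $H'$'' to the bipartite structure. The plan to ``adapt the partitioning argument of Theorem~\ref{thm1}(i)'' is circular, since that argument takes stability as its hypothesis, and your closing ``remedy'' (work globally with the sum over $\phi$) restates the difficulty rather than resolving it. The missing idea is where the hypothesis $\binom{t-s}{2}\le s\le t$ actually enters. For a fixed copy of $H$, the extensions to $H'$ avoiding the at most $b(F)-1$ vertices of $N\cap M$ correspond to copies of $K_{s,t}$ in $K_{|N\setminus M|,|M\setminus N|}$, with $|N|+|M|\le n+b(F)$; this is the \emph{symmetrized} count $\binom{a}{s}\binom{b}{t}+\binom{a}{t}\binom{b}{s}$, and by the Brown--Sidorenko inducibility result it is asymptotically maximized at $a=b$ precisely under $\binom{t-s}{2}\le s\le t$. (Your own local criterion $(t-s)^2\le s+t$ is equivalent to $\binom{t-s}{2}\le s$, but you apply it to the $\alpha$-optimization of the full $H'$-count, which is not the step that is needed; the one-sided quantity $a^sb^t$ that worries you is indeed maximized off-balance, which is exactly why one must count both orientations.) This yields the uniform bound that each copy of $H$ extends in at most $(1+o(1))\cN(K_{s,t},T(n-|V(H)|,2))$ ways, whence $\cN(H',G)\le(1+o(1))\cN(H,G)\cdot\cN(K_{s,t},T(n-|V(H)|,2))/q$, while $T(n,2)$ attains equality. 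Comparing the two, $\ex(n,H',F)-o(n^{|V(H')|})$ copies of $H'$ force $\ex(n,H,F)-o(n^{|V(H)|})$ copies of $H$, and the stability of $H$ then gives edit distance $o(n^2)$ from $T(n,2)$ directly; no separate partitioning step and no optimization over the part ratio are required.
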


\begin{proof}
To prove \textbf{(i)}, assume that there are $p$ ways to obtain $H'$ from $H$ by adding edges, and $H'$ contains $q$ copies of $H$. Observe that if a copy of $H$ is in a complete $r$-partite graph $G_0$, then all the $p$ ways to obtain $H'$ create a subgraph of $G_0$. 

We claim that $\ex(n,H,F)=q\ex(n,H',F)/p-o(n^{|V(H)|})$. Let $G'$ be an $n$-vertex $F$-free graph with $\ex(n,H,F)$ copies of $H$, then $G'$ contains at least $p\ex(n,H,F)/q$ copies of $H'$. Let $T$ be the complete $r$-partite graph obtained by adding and deleting $o(n^2)$ edges from $G_0$, then $G_0$ contains $\cN(H,G')-o(n^{|V(H)|})$ copies of $H$. Since $G_0$ contains at least $p\cN(H,G_0)/q$ copies of $H'$, we have that $\ex(n,H,F)=\cN(H,G')=\cN(H,G_0)-o(n^{|V(H)|})\le q\cN(H',G_0)/p-o(n^{|V(H)|})\le q\ex(n,H',F)/p-o(n^{|V(H)|})$.

Let $G$ be an $n$-vertex $F$-free graph with $\ex(n,H',F)-o(n^{|V(H)|})$ copies of $H'$. Clearly $G$ contains at least $q\ex(n,H',F)/p-o(n^{|V(H)|})=\ex(n,H,F)-o(n^{|V(H)|})$ copies of $H$. Therefore, by the weak $F$-Tur\'an-stability of $H$, $G$ can be obtained from a complete $r$-partite graph by adding and removing $o(n^2)$ edges, completing the proof.

To prove \textbf{(ii)}, let $G$ be an $n$-vertex $F$-free graph. We first embed $H$ and then the additional vertices. Let $Q$ be a copy of $H$ and observe that there are at most $b(F)-1$ common neighbors of $u_1,\dots,u_{b(F)},v_1,\dots,v_{b(F)}$ in $G$. Let $A$ denote the set of vertices that are common neighbors of $u_1,\dots,u_{b(F)}$ and $B$ denote the set of vertices that are common neighbors of $v_1,\dots,v_{b(F)}$ in $G$, then $|A\cap B|\le b(F)-1$. We need to pick $s$ vertices from $A$ and $t$ vertices from $B$. 

Observe that the number of ways to do this while avoiding the vertices in $A\cap B$ is the same as the number of ways to pick $K_{s,t}$ from $K_{|A\setminus B|,|B\setminus A|}$. This is asymptotically the largest if $|A|=|B|$ by a result of Brown and Sidorenko \cite{brosid}. Note that they did this optimization in a slightly different context. Observe that $A\cap B$ is negligible, thus we have at most $(1+o(1))\cN(K_{s,t},T(n-|V(H)|,2))$ ways to extend a copy of $H$ to $H'$. Therefore, the number of copies of $H'$ in $G$ is at most the number of copies of $H$ times $(1+o(1))\cN(K_{s,t},T(n-|V(H)|,2))$, divided by some fixed number $q$ of automorphisms of $H'$. 

Observe that removing $|V(H)|$ vertices of $T(n,2)$, the resulting graph has edit distance $O(n)$ from $T(n-|V(H)|,2)$. Therefore, $\cN(H',T(n,2))=(1+o(1))\cN(H,T(n,2))\cN(K_{s,t},T(n-|V(H)|,2))/q$. Assume now that $G$ has $$\ex(n,H',F)-o(n^{|V(H')|})\ge (1+o(1))\cN(H,T(n,2))\cN(K_{s,t},T(n-|V(H)|,2))/q$$ copies of $H'$. Then $G$ has to contain $\cN(H,T(n,2))-o(n^{|V(H)|})=\ex(n,H,F)-o(n^{|V(H)|})$ copies of $H$, thus $G$ has edit distance $o(n^2)$ from $T(n,2)$ by the $F$-Tur\'an-stable property of $H$, completing the proof.
\end{proof}

\bigskip

\textbf{Funding}: Research supported by the National Research, Development and Innovation Office - NKFIH under the grants KH 130371, SNN 129364, FK 132060, and KKP-133819.

\end{document}